\titleformat{\subsection}[runin]
{\bfseries} {\thesubsection{.}}{0.15cm}{}[.]
\titleformat{\subsubsection}[runin]
{\em}{\thesubsubsection{.}}{0.15cm}{}[.]
\newtheorem{theorem}{Theorem}[section]
\newtheorem{proposition}[theorem]{Proposition}
\newtheorem{lemma}[theorem]{Lemma}
\newtheorem{corollary}[theorem]{Corollary}
\theoremstyle{definition}
\newtheorem{remark}[theorem]{Remark}
\newtheorem{problem}[theorem]{Problem}
\newtheorem{example}[theorem]{Example}
\numberwithin{equation}{section}
\numberwithin{figure}{section}
\newcommand\Hcal{\mathcal{H}}
\newcommand\Cscr{\mathscr{C}}
\newcommand\B{\mathbb{B}}
\newcommand\C{\mathbb{C}}
\newcommand\CP{\mathbb{CP}}
\newcommand\N{\mathbb{N}}
\newcommand\R{\mathbb{R}}
\newcommand\T{\mathbb{T}}
\newcommand\Z{\mathbb{Z}}
\renewcommand\c{\mathbb{C}}
\newcommand\cd{\overline{\mathbb D}}
\newcommand\cp{\mathbb{CP}}
\renewcommand\d{\mathbb D}
\newcommand\n{\mathbb{N}}
\renewcommand\r{\mathbb{R}}
\renewcommand\t{\mathbb{T}}
\newcommand\z{\mathbb{Z}}
\newcommand\igot{\mathfrak{i}}
\renewcommand\igot{\mathfrak{i}}
\renewcommand\imath{\igot}
\newcommand\wt{\widetilde}
\newcommand\dist{\mathrm{dist}}
\newcommand\length{\mathrm{length}}
\newcommand\Flux{\mathrm{Flux}}
\def\dist{\mathrm{dist}}
\def\length{\mathrm{length}}
\def\Flux{\mathrm{Flux}}
\begin{document}

\fancyhead[LO]{The Calabi-Yau problem}
\fancyhead[RE]{A.\ Alarc\'on and F.\ Forstneri\v c} 
\fancyhead[RO,LE]{\thepage}

\thispagestyle{empty}

\begin{center}
{\bf\LARGE The Calabi-Yau problem for Riemann surfaces with
finite genus and countably many ends}

\vspace*{0.5cm}

{\large\bf  Antonio Alarc{\'o}n \; and \; Franc Forstneri{\v c}} 
\end{center}


\vspace*{0.5cm}

\begin{quote}
{\small
\noindent {\bf Abstract}\hspace*{0.1cm}
In this paper, we show that if $R$ is a compact Riemann surface and 
$M=R\setminus\, \bigcup_i D_i$ is a domain in $R$ whose complement  is a union of 
countably many pairwise disjoint smoothly bounded closed discs $D_i$, then 
there is a complete conformal minimal immersion $X:M\to\R^3$, extending to a continuous 
map $X:\overline M\to\R^3$ such that $X(bM)=\bigcup_i X(bD_i)$ is a union of pairwise 
disjoint Jordan curves. In particular, $M$ is the complex structure of a complete bounded minimal 
surface in $\R^3$. This extends a recent result for bordered Riemann surfaces.

\vspace*{0.2cm}

\noindent{\bf Keywords}\hspace*{0.1cm} Riemann surface, minimal surface, 
Calabi-Yau problem
\vspace*{0.1cm}

\noindent{\bf MSC (2010):}\hspace*{0.1cm} 53A10, 53C42; 32B15, 32H02}
%
%
%
%
\end{quote}


\vspace{1mm}

\section{Introduction} 
\label{sec:intro}

A classical problem in the theory of minimal surfaces in Euclidean spaces 
is the {\em conformal Calabi-Yau problem}, asking which open Riemann surfaces, $M$, admit a 
complete conformal minimal immersion $X:M\to\R^n$ $(n\ge 3)$ with bounded image. 
(Recall that a continuous map $X:M\to\R^n$ is said to be complete if the image of any divergent curve 
in $M$ has infinite Euclidean length. If $X$ is an immersion, this is equivalent to 
asking that the Riemannian metric $X^*(ds^2)$ on $M$, induced by the Euclidean metric 
$ds^2$ on $\R^n$ via $X$, is a complete metric.)
This problem originates in a conjecture of E.\ Calabi from 1965 that such immersions do not exist
(see Kobayashi and Eells \cite[p.\ 170]{Calabi1965Conjecture} and Chern \cite[p.\ 212]{Chern1966BAMS}). 
Groundbreaking counterexamples to Calabi's conjecture were given by Jorge and Xavier
\cite{JorgeXavier1980AM} in 1980 (a complete immersed minimal disc in $\R^3$ 
with a bounded coordinate function), Nadirashvili \cite{Nadirashvili1996IM} in 1996  
(a complete bounded immersed minimal disc in $\R^3$), and many others. 
In particular, there are examples in the literature of complete bounded minimal surfaces in $\R^3$ 
with any topological type (see Ferrer, Mart\'in, and Meeks \cite{FerrerMartinMeeks2012AM}). 
The related {\em asymptotic Calabi-Yau problem} (see S.-T.\ Yau \cite[p.\ 360]{Yau2000AMS})
asks about the asymptotic behaviour of such surfaces near their ends. We refer to the recent
papers \cite{AlarconDrinovecForstnericLopez2015PLMS,AlarconForstnericJAMS} for the history and  
literature on these problems.

The first main result of this paper is the following. 

%
%
\begin{theorem}\label{th:main}
Let $R$ be a compact Riemann surface. If $M=R\setminus \bigcup_{i=0}^\infty D_i$
is a domain in $R$ whose complement is a countable union of pairwise disjoint, 
smoothly bounded closed discs $D_i$ (diffeomorphic images of $\cd=\{z\in\C:|z|\le 1\}$), 
then $M$ is the complex structure of a complete bounded minimal surface in $\R^3$.

More precisely, for any $n\ge 3$ there exists a continuous map $X:\overline M\to\R^n$ such that 
$X:M\to \R^n$ is a complete conformal minimal immersion and $X(bM)=\bigcup_i X(bD_i)$ is the 
union of pairwise disjoint Jordan curves. If $n=4$ then $X:M\to\R^4$ can 
be chosen an immersion with simple double points, and
if $n\ge 5$ then $X:\overline M\to\R^n$ can be chosen an embedding.

The analogous result holds if $R$ is a nonorientable compact conformal surface.
\end{theorem}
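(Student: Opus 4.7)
The natural plan is to bootstrap from the Calabi-Yau theorem for compact bordered Riemann surfaces (the finitely-many-ends version this paper extends) via an inductive construction along a carefully chosen exhaustion that ``unveils'' one disc $D_i$ at a time. I describe the orientable case; the nonorientable result follows by running the construction equivariantly on the oriented double cover $\wt R\to R$ with respect to the deck involution $\ttI$, so that the resulting map descends to $M$.

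Enumerate the discs as $\{D_i\}_{i\ge 0}$ and choose a Runge exhaustion $M_1\Subset M_2\Subset\cdots$ of $M$ by smoothly bounded, relatively compact bordered subsurfaces with $\bigcup_j M_j=M$ whose boundary has the form $bM_j=\Gamma_j^0\cup\cdots\cup\Gamma_j^{j-1}\cup\Gamma_j$; here $\Gamma_j^i$ is a smooth Jordan curve in a thin collar of $bD_i$ contracting onto $bD_i$ as $j\to\infty$, while $\Gamma_j$ is a finite union of auxiliary Jordan curves in $M$. Fix a base point $p_0\in M_1$ and a radius $R>0$. The goal is to construct continuous maps $X_j:\overline{M_j}\to\R^n$, conformal minimal immersions on $\mathrm{Int}(M_j)$, together with summable sequences $\epsilon_j,\eta_j\searrow 0$ such that
\begin{enumerate}[(a)]
\item $\|X_{j+1}-X_j\|_{\mathcal C^1(\overline{M_{j-1}})}<\epsilon_j$;
\item for each $i<j$, $X_{j+1}|_{\Gamma_{j+1}^i}$ lies within $\eta_{j+1}$ of $X_j|_{\Gamma_j^i}$ in the natural parametrisation (boundary stabilisation);
\item $\dist_{X_{j+1}^*(ds^2)}(p_0,\,bM_{j+1})>j+1$;
\item $X_j(\overline{M_j})\subset B_R(0)$;
\item $X_j$ is an embedding for $n\ge 5$ and an immersion with only transverse simple double points in the interior for $n=4$.
\end{enumerate}
The step from $X_j$ to $X_{j+1}$ is carried out by a Runge-Mergelyan extension of $X_j$ across the shell $M_{j+1}\setminus M_j$ leaving the restrictions to the frozen curves $\Gamma_j^i$ essentially unchanged, followed by a Nadirashvili/Jorge-Xavier labyrinth construction in a thin collar of $bM_{j+1}$ to secure (c) while staying in $B_R(0)$ and moving $X_j$ on $M_j$ by less than $\epsilon_j/2$, and finally a small generic perturbation to restore (e). The crucial point is that the large-amplitude part of the labyrinth is placed in collars of $\Gamma_{j+1}$ only, not in collars of the frozen circles $\Gamma_{j+1}^i$, so (b) can be achieved with a summable $\eta_{j+1}$.

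Passing to the limit, (a) together with the Cauchy estimates for conformal minimal immersions yields $\mathcal C^\infty$-convergence on compacta of $M$ to a conformal minimal immersion $X:M\to\R^n$. Summability of $\eta_j$ together with (b) means that for each fixed $i$ the parametrized curves $X_j|_{\Gamma_j^i}$ converge uniformly to a continuous curve in $\R^n$; since the $\Gamma_j^i$ contract onto $bD_i$, this furnishes the continuous extension $X:\overline{M}\to\R^n$ with $X(bD_i)$ equal to that limit, a Jordan curve if each $X_j|_{\Gamma_j^i}$ is embedded and $\eta_j$ is small enough. Pairwise disjointness of the family $\{X(bD_i)\}$ is enforced by choosing $\eta_j$ smaller than half the minimum distance between previously stabilised Jordan curves. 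Completeness follows from (c), which is preserved by the summable perturbations of (a) in subsequent steps, since any divergent path must cross every $bM_j$ and thereby accumulate unbounded length; (d) gives boundedness and (e) passes to the limit by transversality. The principal technical difficulty is satisfying (b), (c), and (d) simultaneously: completeness forces large-amplitude labyrinths, which must be confined to collars of the new auxiliary boundary $\Gamma_{j+1}$ so as not to disturb the frozen boundary data by more than a summable amount, while keeping the entire image inside the fixed ball $B_R(0)$. Achieving this precise localisation is exactly what the flexibility of the bordered Riemann surface case provides, now iterated countably many times with summable control.
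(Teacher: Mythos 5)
Your scheme diverges from the paper's in a structural way: you exhaust $M$ from inside by an increasing chain $M_1\Subset M_2\Subset\cdots$ of relatively compact bordered subsurfaces, whereas the paper uses the \emph{decreasing} chain $M_i=R\setminus\bigcup_{k=0}^i\mathring D_k$ of compact bordered Riemann surfaces, each of which contains $\overline M$. With the decreasing chain, every $X_i$ is already defined on the fixed set $\overline M$, so the continuous extension to $\overline M$ comes for free from uniform convergence on $\overline M$ --- there is no boundary-stabilisation condition like your (b) and no need to reconcile parametrisations of curves $\Gamma_j^i$ contracting onto $bD_i$. The only ingredient needed to push the completeness lower bound to the limit is a quantitative stability estimate for the intrinsic distance under $\mathcal C^0$-small perturbations, which the paper isolates as Lemma~\ref{lem:lowerbound} and records inductively in condition (d$_i$); you would need an analogue of this in any case.

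Beyond that structural comparison, the ``crucial point'' you highlight is in fact a gap. You insist on placing the large-amplitude labyrinth only in collars of the auxiliary boundary $\Gamma_{j+1}$ and not near the frozen curves $\Gamma_{j+1}^i$, on the grounds that a labyrinth there would wreck (b). But your condition (c) requires $\dist_{X_{j+1}}(p_0,bM_{j+1})>j+1$ with respect to \emph{every} component of $bM_{j+1}$, including each $\Gamma_{j+1}^i$. A labyrinth confined to the collars of $\Gamma_{j+1}$ does nothing to lengthen paths reaching $\Gamma_{j+1}^i$, and since the shell extension must be $\mathcal C^1$-small for (a), the distance from $p_0$ to $\Gamma_{j+1}^i$ stays roughly at its step-$j$ value, not $>j+1$. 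Consequently a divergent path heading to a fixed annular end $bD_i$ need never cross a labyrinth and can have finite length in the limit metric, so completeness fails. The underlying confusion is the belief that a labyrinth near the frozen curves is incompatible with (b): the whole point of the Jorge--Xavier/Nadirashvili mechanism, and of Lemma~\ref{lem:EnlargingDiameter} which the paper applies at each step, is that the intrinsic diameter with respect to \emph{all} boundary components can be made arbitrarily large while moving the map by an arbitrarily small amount in $\mathcal C^0$. One should therefore enlarge the diameter uniformly on the whole boundary at each stage, accept a small $\mathcal C^0$ drift of the images of the frozen curves, and rely on summability --- exactly the bookkeeping that the paper's decreasing domains make automatic.
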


The discs $D_i$ in the theorem are assumed to have boundaries of class $\Cscr^r$ for some
$r>1$, possibly noninteger. The Jordan curves $X(bD_i)\subset\R^n$ are everywhere 
nonrectifiable,  but we show that $X$ can be chosen such that they have Hausdorff dimension one. 

%
%
The analogue of Theorem \ref{th:main}  when $M$ is the complement of finitely many 
pairwise disjoint discs in a compact Riemann surface was obtained in 
\cite[Theorem 1.1]{AlarconDrinovecForstnericLopez2015PLMS}; this result also follows by a simplification 
of the proof of Theorem \ref{th:main}. Such $M$ is a {\em bordered Riemann surface} whose 
boundary, $bM$, consists of finitely many Jordan curves. The surfaces in Theorem \ref{th:main}
still have finite genus, but they may have countably many ends.

%
%
Theorem \ref{th:main} is proved in Sect.\ \ref{sec:proof} by an inductive application of 
\cite[Lemma 4.1]{AlarconDrinovecForstnericLopez2015PLMS}
(see Lemma \ref{lem:EnlargingDiameter}) which shows how to increase
the intrinsic diameter of a conformal minimal immersion $M\to\R^n$ from a compact bordered
Riemann surface by an arbitrarily big amount, while at the same time keeping the map 
uniformly close to the given one. Lemma \ref{lem:lowerbound} provides 
an estimate of the intrinsic radius of the image surface from below during the inductive process.

%
%
The proof of Theorem \ref{th:main} gives several additions. In particular, the complete conformal
minimal immersion $X:M\to\R^n$ can be chosen to have vanishing flux. 
Alternatively, a minor modification of the proof enables us to prescribe the flux of $X$ 
on any given finite family of classes in the first homology group $H_1(M,\z)$; 
however, we do not know whether $X$ can be chosen with arbitrarily prescribed flux map.
On the other hand, if we do not insist on controlling the flux of $X$, then we can choose any conformal 
minimal immersion $X_0:\overline {R\setminus D_0}\to\R^n$ $(n\ge 3)$ and find for any given number 
$\epsilon>0$ a map $X$ as in the theorem which is uniformly $\epsilon$-close to $X_0$ on $\overline M$.

We wish to emphasize that the class of domains in Theorem \ref{th:main} contains the conformal
classes of all Riemann surfaces of finite genus with at most countably many ends, none of which 
are point ends. Indeed, the uniformization theorem of Z.-X.\ He and O.\ Schramm 
\cite[Theorem 0.2]{HeSchramm1993} says that every open Riemann surface, $M'$, 
with finite genus and at most countably many ends is conformally equivalent 
to a {\em circle domain} in a compact Riemann surface $R$, i.e., a domain of the form
\begin{equation}\label{eq:circle domain}
	M= R\setminus \bigcup_{i} D_i
\end{equation} 
whose complement is the union of at most countably many connected components $D_i$
each of which is either a closed geometric disc or a point. Here, 
a {\em geometric disc} in a Riemann surface $R$ is a topological disc whose lifts in the universal cover 
$\wt R$ of $R$ (which is the disc, the Euclidean plane, or the Riemann sphere) are round discs in $\wt M$.
The ends $D_i$ of $M$ which are points are called {\em point ends}, while the others are called
{\em disc ends}. An {\em annular end} is a disc end which does not contain any limit points of other ends.
A {\em puncture end}, or simply a {\em puncture}, is an end which is conformally isomorphic 
to the punctured disc; it corresponds to an isolated boundary point of a domain of the form \eqref{eq:circle domain}.
The type of an end is independent of a particular representation of a given open 
Riemann surface as a circle domain, and hence the above notions are well defined 
for open Riemann surfaces in this class. 
The He-Schramm  theorem includes as a special case open Riemann surfaces of finite
topological type (i.e., with finitely generated first homology group $H_1(M,\Z)$)
and says that every such is conformally equivalent to a domain in a compact Riemann surface
whose complement consists of finitely many closed geometric discs and points.
(This was known earlier, see e.g.\ the paper by E.\ L.\ Stout \cite{Stout1965TAMS}.) 

In light of these results, Theorem \ref{th:main} gives the following immediate corollary.
The second statement follows from the fact that a bounded harmonic function extends harmonically 
across an isolated point, and hence a bounded complete conformal minimal surface
does not have any punctures.

%
%
%
%
\begin{corollary} \label{cor:conformalCY}
Every open Riemann surface of finite genus and at most countably ends,
none of which are point ends, is the conformal structure of a complete bounded 
immersed minimal surface in $\R^3$, and of a complete bounded embedded minimal surface in $\R^5$.

An open Riemann surface of finite topological type admits a bounded complete 
conformal minimal immersion into $\R^n$ for some (and hence for any) integer $n\ge 3$ 
if and only if it has no point ends. 
\end{corollary}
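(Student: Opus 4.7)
The plan is to construct $X$ as the uniform limit on $\overline M$ of a sequence $X_k\colon M_k\to\R^n$ of conformal minimal immersions defined on a decreasing exhaustion of $\overline M$ by compact bordered Riemann surfaces. I enumerate the removed discs as $D_0, D_1, \ldots$ and, for each $k\ge 0$, choose smoothly bounded, pairwise disjoint closed discs $D_i^k\subset R$ for $0\le i\le k$ with $D_i\subset (D_i^k)^\circ$ and $D_i^k\searrow D_i$ as $k\to\infty$. Then $M_k := R\setminus\bigcup_{i=0}^k (D_i^k)^\circ$ is a compact bordered Riemann surface, $M_k\supset M_{k+1}$, and $\bigcap_k M_k = \overline M$. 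Fix a base point $p$ in the interior of $M_0$ and an initial conformal minimal immersion $X_0\colon M_0\to\R^n$; any such $X_0$ is admissible, by the parenthetical addendum following the statement of the theorem.

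The inductive step constructs $X_k$ from $X_{k-1}$ by applying Lemma \ref{lem:EnlargingDiameter} on $M_{k-1}$ and restricting to $M_k\subset M_{k-1}$, enforcing three conditions chosen recursively once $X_{k-1}$ is known: \textbf{(a)} $\|X_k-X_{k-1}\|_{M_k}<\epsilon_k$ for a summable sequence $\epsilon_k>0$; \textbf{(b)} the intrinsic distance $\dist_{X_k^*(ds^2)}(p,\,bM_k)$ exceeds $k$; and \textbf{(c)} $\epsilon_k$ is smaller than the threshold supplied by Lemma \ref{lem:lowerbound}, so that the intrinsic-distance lower bounds established in earlier steps, now measured with respect to $X_k$, survive the perturbation up to a loss summable in $k$. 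For the refinements one additionally imposes a general-position condition on $X_k|_{M_k}$: if $n=4$, that the self-intersections be simple double points; if $n\ge 5$, that $X_k$ be injective. Both are open conditions attainable on a dense set of conformal minimal immersions by a routine transversality argument, and they can be preserved thereafter by shrinking $\epsilon_k$ further.

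Summability of $(\epsilon_k)$ yields uniform convergence of $X_k$ on $\overline M$ to a continuous map $X\colon\overline M\to\R^n$. Interior elliptic regularity upgrades this to $C^\infty$ convergence on compact subsets of $M$, so the nullity relation $\partial X\cdot\partial X=0$ and the maximal-rank condition pass to the limit; thus $X|_M$ is a conformal minimal immersion. Completeness follows because any divergent curve in $M$ eventually leaves every $M_k$, while (b)--(c) combined with Lemma \ref{lem:lowerbound} give a lower bound $\dist_{X^*(ds^2)}(p,bM_k)\ge k-\eta$ for a fixed small $\eta>0$, forcing divergent curves to have infinite length. That $X(bM)$ is a disjoint union of Jordan curves follows from the Hausdorff convergence $X_k(bD_i^k)\to X(bD_i)$ for each fixed $i$, with injectivity on $bD_i$ and mutual disjointness $X(bD_i)\cap X(bD_j)=\emptyset$ for $i\ne j$ arranged by incorporating further quantitative constraints into $\epsilon_k$ from the stage $k=i$ onwards.

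The main obstacle is the bookkeeping of the three competing smallness conditions on $\epsilon_k$: it must ensure (i) uniform convergence and continuous extension to $\overline M$, (ii) preservation of every previously established intrinsic-distance lower bound -- essential because small $C^0$ perturbations can drastically shorten a curve's $X^*(ds^2)$-length, which is precisely the reason Lemma \ref{lem:lowerbound} is needed, and (iii) injectivity and general-position along the proliferating family of boundary Jordan curves. Since (ii) and (iii) depend on data available only after $X_{k-1}$ is known, the $\epsilon_k$ must be chosen recursively rather than up front, giving the argument its diagonal character. The nonorientable case reduces to the orientable one by working equivariantly on the oriented double cover of $R$.
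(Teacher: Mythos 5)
Your proposal sketches (reasonably faithfully) the construction that proves Theorem~\ref{th:main}, but Corollary~\ref{cor:conformalCY} is not that theorem: it concerns an \emph{abstract} open Riemann surface rather than a concrete circle domain $R\setminus\bigcup_i D_i$, and its second sentence is a biconditional. Two essential ingredients of the paper's deduction are missing from your argument.

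First, you never pass from the abstract surface to the setting of Theorem~\ref{th:main}. The bridge is the He--Schramm uniformization theorem \cite{HeSchramm1993}: every open Riemann surface of finite genus and at most countably many ends is conformally equivalent to a circle domain $M=R\setminus\bigcup_i D_i$ in a compact Riemann surface $R$, where each component $D_i$ of the complement is either a closed geometric disc or a single point. The hypothesis ``no point ends'' discards the one-point components, so the complement consists only of closed discs and Theorem~\ref{th:main} applies directly. Without quoting a uniformization theorem you have no right to assume the surface sits inside a compact $R$ with the complement decomposed into discs $D_i$ at all; that presentation is precisely what the corollary has to extract from the abstract hypotheses.

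Second, you do not address the ``only if'' direction of the second statement, which is not a consequence of Theorem~\ref{th:main}. The argument the paper uses is a removable singularity argument: if $M$ has a point end (a puncture), then any bounded conformal minimal immersion $X\colon M\to\R^n$ has bounded harmonic coordinate functions, each of which extends harmonically across the isolated boundary point; hence $X$ extends smoothly across the puncture, and a path running into the puncture has finite $X^*(ds^2)$-length, so $X$ is not complete. Your proposal establishes only one direction (existence when there are no point ends) and proves it by replaying the proof of Theorem~\ref{th:main}, which is more work than the corollary calls for and still leaves the converse unaddressed.

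Finally, even for the direction you do treat, there is no need to reconstruct the inductive approximation scheme with Lemmas~\ref{lem:lowerbound} and~\ref{lem:EnlargingDiameter}: once the He--Schramm reduction is in place, the corollary is a one-line consequence of Theorem~\ref{th:main}. What you have written is a (somewhat rough) re-derivation of the wrong statement.
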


%
%
On the other hand, Colding and Minicozzi proved \cite[Corollary 0.13]{ColdingMinicozzi2008AM} 
that a complete {\em embedded} minimal surface of finite topology in $\R^3$ is necessarily proper 
in $\R^3$, hence unbounded; this was extended to surfaces of finite genus and countably 
many ends by Meeks, P\'erez, and Ros \cite[Theorem 1.3]{MeeksPerezRos-CY}.
Hence, Corollary \ref{cor:conformalCY} exposes a major dichotomy
between the immersed and the embedded conformal Calabi-Yau problem in dimension $3$.

%
%
\begin{remark} 
We recall the following classical results on the boundary regularity of conformal maps.
These show that we are free to precompose conformal minimal immersions $\overline M\to \R^n$ from 
a compact bordered Riemann surface, $\overline M=M\cup bM$, 
by conformal isomorphisms $M'\to M$, provided that both 
bordered Riemann surfaces $M$ and $M'$ have boundaries of class $\Cscr^r$ for some $r>1$; 
this does not affect the boundary regularity of the maps.
Indeed, any conformal isomorphism $\phi:M\to M'$ between two such surfaces
extends to a homeomorphism $\phi:\overline M\to\overline {M'}$ of their closures
by the seminal theorem of Carath\'eodory \cite{Caratheodory1913} from 1913. 
Furthermore, if the boundaries $bM$ and $bM'$ are smooth (of class $\Cscr^\infty$), 
then the extension $\phi : \overline M\to\overline {M'}$ is a smooth diffeomorphism 
by Painlev\'e's theorem from 1887 \cite{Painleve1887,Painleve1891}. Improvements of these results
were made by many authors. In particular, it was shown by Warschawski \cite{Warschawski1935} in 1935 
that if $bM$ is of class $\Cscr^{k,\alpha}$ for some $k\in\Z_+=\{0,1,2,\ldots\}$ and $0<\alpha<1$, then 
$\phi$ is of the same class $\Cscr^{k,\alpha}$ on $\overline M$. See Goluzin \cite{Goluzin1969} 
for more information. For the corresponding boundary regularity results for minimal surfaces, 
see Nitsche \cite{Nitsche1969IM2,Nitsche1969IM1}.
\qed\end{remark}

%
%
The situation for Riemann surfaces more general than those in Theorem \ref{th:main} is not understood
yet, and we mention the following open problems in this direction. 

\begin{problem}\label{prob:problem1}
(A) Let $M$ be a domain of the form $M=R\setminus K$ in a compact Riemann surface $R$,
where $K$ is a nonempty compact subset of $R$.
Assume that $M$ admits a nonconstant bounded harmonic function $h:M\to (a,b)$ 
which does not extend to a bounded harmonic function in any bigger domain in $R$.
Does $M$ admit a bounded complete conformal minimal immersion into $\R^3$?

\noindent (B) Is there an example of a complete bounded minimal surface in $\R^3$ 
whose underlying complex structure is $\C\setminus K$, where $K$ is a Cantor set in $\C$?
\end{problem}

Recall (see \cite{JorgeXavier1980AM,AlarconFernandez2011DGA,AlarconFernandezLopez2012CMH,AlarconFernandezLopez2013CVPDE}) that every nonconstant bounded harmonic function $h:M\to (a,b)\subset \R$ 
on an open Riemann surface $M$ is a component function of a complete conformal minimal 
immersion $X=(X_1,X_2,h):M\to\R^3$ whose range is therefore contained in the slab 
$\{x=(x_1,x_2,x_3)\in\R^3: a<x_3<b\}$. Note also that the complement of a compact set $K\subset \C$ 
of positive capacity admits nonconstant bounded holomorphic (hence harmonic) functions.
This shows that the above questions are very natural.

A particular case of problem (A) concerns surfaces with point ends on which disc ends cluster. 
We wish to thank Antonio Ros for having asked whether anything could be said about the conformal Calabi-Yau problem
in this case (private communication on May 17, 2019). This seems a difficult problem, and the answer 
may depend on how the disc ends approach the set of point ends. In Section \ref{sec:proof}
we prove the following positive result under the assumption that the compact set of point 
ends is at infinite distance from the interior of $M$.

%
%
\begin{theorem}\label{th:main2}
Let $R$ be a compact Riemann surface, and let $M$ be a domain in $R$ of the form
\begin{equation}\label{eq:MDq}
	M = R\setminus (D\cup  E),  
\end{equation}
where $E$ is a compact set in $R$ and $D=\bigcup_{i=0}^\infty D_i$ is the union of a 
countable family of pairwise disjoint closed geometric discs $D_i\subset R\setminus E$.
Fix a point $p_0\in M$ and set $M_i=R\setminus  \bigcup_{j=0}^i D_j$ for every $i\in\N$. If  
\begin{equation}\label{eq:infinitedistance} 
	\lim_{i\to\infty} \dist_{M_i}(p_0,E)=+\infty,
\end{equation}
then there exists a continuous map $X:\overline M\to \R^3$ such that $X|_M:M\to\R^3$ is
a complete conformal minimal immersion and $X|_{bD}:bD=\bigcup_{i=0}^\infty bD_i \to\R^3$
is a topological embedding. In particular,  $M$ is the complex structure of a complete bounded 
minimal surface in $\R^3$. The analogous result holds if $R$ is nonorientable.
\end{theorem}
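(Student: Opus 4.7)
The strategy is to follow the proof of Theorem \ref{th:main}, producing $X$ as the limit of a sequence of conformal minimal immersions $X_i\colon\overline{M_i}\to\R^3$ obtained by applying Lemma \ref{lem:EnlargingDiameter} near each disc $D_i$ in turn. By the argument of Theorem \ref{th:main} (enlarging diameters at step $j$ and preserving them thereafter via Lemma \ref{lem:lowerbound}), such an $X$ is automatically complete for every divergent curve in $M$ clustering on some $bD_j$. The new task is to use the hypothesis \eqref{eq:infinitedistance} to also force divergent curves clustering on $E$ to have infinite Euclidean length.

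I would first fix a smooth Riemannian metric $g$ on $R$ realising the distances $\dist_{M_i}$, a compact exhaustion $K_0\subset K_1\subset\cdots$ of $M$, and an initial conformal minimal immersion $X_0\colon\overline{M_0}\to\R^3$ smooth up to $bD_0$. Writing $X_0^*ds^2=\lambda_0^2\,g$, the continuous positive factor $\lambda_0$ on the compact surface $\overline{M_0}$ satisfies $\lambda_0\ge c_0$ for some $c_0>0$. In the inductive step I would invoke Lemma \ref{lem:EnlargingDiameter} to produce $X_{i+1}$ from $X_i$ with the usual properties used in Theorem \ref{th:main} (summable sup-norm approximation, prescribed enlargement of the intrinsic distance from $p_0$ to $bD_{i+1}$, and continuous extension to a Jordan curve on $bD_{i+1}$ disjoint from the previous ones), together with two extra requirements: the modification is supported in a thin neighbourhood of $bD_{i+1}$ disjoint from $K_i$, and the new conformal factor satisfies $\lambda_{i+1}\ge c_0$ on $\overline{M_{i+1}}$. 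Passing to the limit yields a continuous map $X\colon\overline M\to\R^3$ which is a conformal minimal immersion on $M$, sends each $bD_j$ homeomorphically onto a Jordan curve, and has pairwise disjoint boundary curves.

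For completeness of $X|_M$ I would case-split on the cluster set of a divergent curve $\gamma$ in $M$. If it meets some $bD_j$, the argument is unchanged from Theorem \ref{th:main}. If instead it lies entirely in $E$, then $\gamma\subset M\subset M_i$ joins $\gamma(0)\in M$ to $E$ for every $i$; prepending a fixed path in $M$ from $p_0$ to $\gamma(0)$ of $g$-length $C$ gives a path in $M_i$ from $p_0$ to $E$, so $\length_g(\gamma)\ge\dist_{M_i}(p_0,E)-C$ for every $i$. Hypothesis \eqref{eq:infinitedistance} then forces $\length_g(\gamma)=+\infty$, and the limiting bound $\lambda\ge c_0$ on $M$ promotes this to
\[
\length_X(\gamma)=\int_\gamma\lambda\,ds_g\ \ge\ c_0\,\length_g(\gamma)\ =\ +\infty,
\]
as required.

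The main obstacle I see is the preservation of the uniform lower bound $\lambda_i\ge c_0$ through the induction, since Lemma \ref{lem:EnlargingDiameter} provides only sup-norm approximation of the map. However, the labyrinth construction in its proof in \cite{AlarconDrinovecForstnericLopez2015PLMS} is explicitly designed to amplify the line element, so a mild inspection of that construction should show that $\lambda$ can be kept above any prescribed positive constant. Modulo this technical point the argument is routine, and the nonorientable case reduces to the orientable one by lifting to the orientable double cover of $R$ and imposing equivariance under the covering involution, exactly as for Theorem \ref{th:main}.
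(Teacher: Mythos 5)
Your strategy differs from the paper's in an essential way, and the point you flag as ``the main obstacle'' is in fact a genuine gap that the paper's argument is specifically designed to avoid.

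You propose to keep the conformal factors $\lambda_i$ (where $X_i^*ds^2=\lambda_i^2\,g$) uniformly bounded below by a single constant $c_0>0$ on all of $\overline M_{i+1}$ throughout the induction, and also to keep each modification supported in a thin collar near $bD_{i+1}$. Neither property is supplied by Lemma~\ref{lem:EnlargingDiameter}, which gives only a $\Cscr^0$-small perturbation on the whole surface. A $\Cscr^0$-small perturbation of a conformal minimal immersion is $\Cscr^1$-close on compact subsets of the interior (by interior elliptic estimates), so you could keep $\lambda_{i+1}\ge c_0/2$ on a fixed compact $K_i$; but near the new boundary $bD_{i+1}$ there is no such control, and it is precisely there that the construction in \cite{AlarconDrinovecForstnericLopez2015PLMS} does its work. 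Asserting that ``a mild inspection'' of that proof yields a uniform lower bound on $\lambda$ amounts to proving a strengthened version of Lemma~\ref{lem:EnlargingDiameter}; it is not a consequence of the lemma as stated, and your argument for the completeness near $E$ collapses without it, since the inequality $\length_X(\gamma)\ge c_0\,\length_g(\gamma)$ is the only place where the metric hypothesis \eqref{eq:infinitedistance} enters your proof.

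The paper's proof avoids any control of the conformal factor. It exploits only the comparability of $g_k:=X_k^*(ds^2)$ with the reference metric on the \emph{compact} surface $M_{i_k}$, a comparability whose constant is allowed to degrade with $k$. Given that comparability, hypothesis~\eqref{eq:infinitedistance} (which is a statement about the reference metric on $R$) transfers to $g_k$, so one can select $i_{k+1}$ large enough that $\dist_{M_{i_{k+1}},\,g_k}(p_0,E)>k+1$. Lemma~\ref{lem:lowerbound2} then shows this lower bound survives a sufficiently close $\Cscr^0$-approximation of $X_k$ by the next map $X_{k+1}$, and likewise survives passage to the limit $X$. In parallel, Lemma~\ref{lem:EnlargingDiameter} guarantees $\dist_{M_{i_{k+1}},\,X_{k+1}}(p_0,bM_{i_{k+1}})>k+1$. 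Thus the distance from $p_0$ to the boundary (both the circles $bD_i$ and the set $E$) is pushed to infinity without ever needing a uniform lower bound on $\lambda$. If you wish to salvage your route, you would have to prove the stronger approximation lemma you are implicitly invoking; the paper's stage-by-stage comparability argument is the cleaner path.

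A further, more minor point: your insistence that the modification be supported away from $K_i$ is also not what Lemma~\ref{lem:EnlargingDiameter} gives, but since that requirement plays no role in your completeness argument it is harmless to drop.
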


The distance $\dist_{M_i}(p_0,E)$ is measured with respect to a Riemannian metric $\mathrm d$ on 
the ambient surface $R$. In particular, the set $E$ may consist of point ends of $M$, and it may even be 
a Cantor set. Our proof fails for point ends which are at finite distance from an interior point, 
and it remains an open problem to decide what can happen in such case.

%
%
We give an example of a domain in $\CP^1$ satisfying the requirements in Theorem \ref{th:main2} 
that is inspired by the labyrinth constructed by Jorge and Xavier in \cite{JorgeXavier1980AM}.

\begin{example}\label{ex:pointends}
Let $0<a<b<1$ be a pair of numbers and let $\lambda>0$. Choose finitely many numbers 
$a<s_0<s_1<\cdots< s_k<b$. For each $j=1,\ldots, k$ let
\[ 
	\delta_j=\frac{s_j-s_{j-1}}3>0
\] 
and 
\[ 
	K_j=\{z\in\c: s_{j-1}+\delta_j\le |z| \le  s_j-\delta_j, \ \  |\arg((-1)^jz)|\ge \delta_j\},
\] 
where $\arg(\cdot)$ is the principal branch of the argument with values in $(-\pi,\pi]$.
Up to a slight enlargement of each $K_j$, we can assume that they are smoothly bounded closed discs, 
still being pairwise disjoint. Let $\T$ denote the unit circle in $\C$.
Setting $K=\bigcup_{j=1}^k K_j$, it turns out that $\dist_{\d\setminus K}(a\t,b\t)>\lambda$ provided 
the integer $k\ge 1$ is chosen sufficiently large. Assume that this is so and 
denote the resulting set $K$ by $K_{a,b,\lambda}$. 

Now, choose a decreasing sequence $1>b_1>a_1>b_2>a_2>\cdots$ with $\lim_{i\to\infty} a_i=0$. 
Set $R=\cp^1$, $D_0=\cp^1\setminus \d$, and let denote by $D_1,D_2,\ldots$ the components of 
$\bigcup_{j=1}^\infty K_{a_j,b_j,1}$, ordered so that $|z_i|>|z_j|$ for all $z_i\in D_i$ 
and $z_j\in D_j$ for any pair of indices $i<j$. It is clear that the domain $M=R\setminus (D\cup E)$ 
with $E=\{0\}$ satisfies condition \eqref{eq:infinitedistance} for $M_i=R\setminus  \bigcup_{j=0}^i D_j$ and 
any point $p_0\in M$.
\qed\end{example}

The idea in the previous example can also be used to show the following.

%
%
\begin{proposition}\label{prop:infinitedistance}
Let $E$ be a proper compact subset of a compact Riemann surface $R$.
Then there is a sequence of closed, smoothly bounded, pairwise disjoint discs $D_i\subset R\setminus E$
$(i\in\Z_+)$ such that $M=R\setminus \big( \bigcup_{i=0}^\infty D_i \cup  E\big)$ is a domain
satisfying condition \eqref{eq:infinitedistance}. Hence, the domain $M$ is the complex structure of a complete 
bounded minimal surface in $\R^3$.
\end{proposition}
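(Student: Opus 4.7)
\medskip

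\noindent\textbf{Plan of proof.} The plan is to build the $D_i$ as a nested sequence of Jorge--Xavier labyrinths, each supported in one of a decreasing family of annular shells shrinking onto $E$; this is a global analogue of Example~\ref{ex:pointends}. Fix a smooth Riemannian distance $\mathrm d$ on $R$ and a point $p_0\in R\setminus E$; after replacing $R\setminus E$ by the connected component containing $p_0$ if necessary, we may assume it is connected.

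We first choose a decreasing sequence $V_1\supset\overline{V_2}\supset V_2\supset\overline{V_3}\supset\cdots$ of relatively compact open neighborhoods of $E$, each with smooth compact boundary, such that $p_0\notin\overline{V_1}$ and $\bigcap_n V_n=E$; such $V_n$ arise by smoothing sublevel sets of the function $p\mapsto\dist(p,E)$ and applying Sard's theorem to select regular values. Set $A_n=V_n\setminus\overline{V_{n+1}}$, a smooth relatively compact shell with compact closure in $R\setminus E$. The main step is to construct, inside each shell $A_n$, a finite family $\mathcal L_n$ of pairwise disjoint, smoothly bounded closed discs contained in the interior of $A_n$, each of $\mathrm d$-diameter at most $1/n$, with the properties that (i) every path in $A_n\setminus\bigcup_{D\in\mathcal L_n}D$ joining $bV_n$ to $bV_{n+1}$ has $\mathrm d$-length at least $1$, and (ii) the open set $A_n\setminus\bigcup_{D\in\mathcal L_n}D$ is connected. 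We carry this out component by component: each component of $\overline{A_n}$ is a compact smooth bordered surface, which we cover by finitely many coordinate charts on which $\mathrm d$ is bi-Lipschitz to the Euclidean metric, and into each chart we insert a suitably rescaled copy of the planar Jorge--Xavier labyrinth of Example~\ref{ex:pointends}: thin crescents filling alternating sides of thin annular subdivisions of the shell. Taking the subdivisions fine enough yields both the length lower bound and the diameter upper bound, while the alternation of gaps ensures~(ii).

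We then enumerate the discs in $\bigsqcup_n\mathcal L_n$ as $\{D_i\}_{i\ge 0}$ in any order that exhausts each $\mathcal L_n$ after finitely many steps. Because of the diameter bound and the fact that the discs of $\mathcal L_n$ lie in $V_n$, every accumulation point of $\bigcup_i D_i$ not already in the union lies in $E$, so $\bigcup_i D_i\cup E$ is closed and $M$ is open; connectedness of $M$ follows from (ii) together with the assumed connectedness of $R\setminus E$. To verify \eqref{eq:infinitedistance}, fix $n\ge 1$ and let $i_n$ be the smallest index with $\mathcal L_1\cup\cdots\cup\mathcal L_n\subset\{D_0,\ldots,D_{i_n}\}$. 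Any path from $p_0$ to $E$ in $M_{i_n}$ begins outside $\overline{V_1}$ and ends in $E\subset\bigcap_k V_k$, so it must traverse each shell $A_k$ for $k=1,\ldots,n$; by (i) its portion inside each such $A_k$ has $\mathrm d$-length at least $1$, and hence the path has total length at least $n$. Monotonicity of $i\mapsto\dist_{M_i}(p_0,E)$ then gives $\dist_{M_i}(p_0,E)\to+\infty$, and Theorem~\ref{th:main2} delivers the complete bounded minimal surface.

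The principal obstacle is the component-by-component labyrinth step, particularly when $E$ is geometrically irregular (for instance a Cantor set), so that the shells $A_n$ may have very many small components of complicated shape; some care is also required so that the discs inserted near the outer boundary $bV_n$ do not push the effective length in the next shell downward. Once a finite atlas of bounded metric distortion is fixed on each $\overline{A_n}$, however, the construction reduces to rescalings of the planar crescents of Example~\ref{ex:pointends} and becomes essentially a compactness and bookkeeping exercise.
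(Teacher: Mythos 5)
Your overall strategy---nested shells around $E$, a Jorge--Xavier labyrinth in each shell forcing a length $\ge 1$ per crossing, and then invoking Theorem~\ref{th:main2}---is exactly the strategy of the paper's proof, so the route is the same in spirit. However, there is one concrete gap in the shell construction and one incorrect side claim.

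The gap: you build the shells $A_n=V_n\setminus\overline{V_{n+1}}$ from smoothed sublevel sets of $\dist(\cdot,E)$, choosing regular values via Sard's theorem. This only controls the smoothness of $bV_n$; it does \emph{not} control the topology of $A_n$, whose components can have positive genus and arbitrarily many boundary circles (think of $E$ a Cantor set, where level sets of the distance function split into many pieces). The Jorge--Xavier construction of Example~\ref{ex:pointends} that you rescale is designed for an annulus, where ``thin annular subdivisions'' and ``alternating sides'' make sense; in a general bordered component the phrase ``annular subdivisions of the shell'' has no meaning, and it is not explained how the crescent pattern should be laid out so that \emph{every} crossing path is lengthened. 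Covering by charts does not fix this: a labyrinth confined to one chart does not block a path that crosses the shell through another part. The paper avoids the issue entirely by taking a Morse exhaustion function $\rho$ of $R\setminus E$ and choosing the intervals $[a_j,b_j]$ to contain no critical values; then each shell $\rho^{-1}([a_j,b_j])$ is automatically a finite disjoint union of annuli, and the labyrinth of Example~\ref{ex:pointends} applies directly. You should replace your distance-function shells with Morse-regular shells of this kind.

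A second, smaller issue: the claim that each disc in $\mathcal L_n$ can be taken of $\mathrm d$-diameter at most $1/n$ is inconsistent with what you actually construct. The crescents $K_j$ in Example~\ref{ex:pointends} wrap nearly all the way around the annulus (angular extent close to $2\pi$), so their diameter is comparable to the circumference of the shell, which is bounded below unless $E$ shrinks to a point. Shrinking the discs would also destroy the labyrinth effect: a grid of tiny discs does not force radial paths to detour by any fixed amount. Fortunately this diameter bound is not needed anywhere---closedness of $\bigcup_iD_i\cup E$ already follows from $D_i\subset V_{n}$ for $i$ in the $n$-th block and $\bigcap_n V_n=E$---so you should simply drop it.
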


\begin{proof}
Choose a point $p_0\in R\setminus E$ and a Morse exhaustion function $\rho:R\setminus E\to\R$
with $\rho(p_0)<0$. There are sequences $0<a_1<b_1<a_2<b_2<\cdots$ 
converging to $+\infty$ such that $\rho$ has no critical values in $[a_j,b_j]$ for every $j\in\N$.
It follows that the set $A_j=\{p\in M: a_j\le \rho(p)\le b_j\}$ is a union of finitely many 
pairwise disjoint annuli for each $j$. By placing sufficiently
many closed pairwise disjoint geometric discs in the interior of each connected component
of $A_j$, similarly to what has been done in the above example, we make the length
of every path crossing $A_j$ longer than $1$. (The length is measured with respect to 
any given Riemannian metric on $R$.) Doing this for every $j\in \N$ yields a
countable sequence of pairwise disjoint closed discs $D_i\subset R\setminus (E\cup \{p_0\})$ 
such that the length of any path from $p_0$ to $E$ which avoids all the discs $D_i$ is infinite
and \eqref{eq:infinitedistance} holds.
\end{proof}

%
%
The techniques developed in 
\cite{AlarconForstneric2015MA,AlarconDrinovecForstnericLopez2015PLMS,AlarconForstnericLopez2019CM}
furnish an analogue of Lemma \ref{lem:EnlargingDiameter} for immersed holomorphic 
curves in $\C^n$ $(n\ge 2)$, null holomorphic curves in $\C^n$ for $n\ge 3$, 
and holomorphic Legendrian curves in $\C^{2n+1}$ for $n\ge 1$.
Recall that null curves are holomorphic immersions $Z=(Z_1,\ldots,Z_n):M\to\C^n$
from an open Riemann surface $M$ satisfying the nullity condition
$
	(dZ_1)^2+(dZ_2)^2+\cdots + (dZ_n)^2 = 0.
$
Every holomorphic curve in $\C^n$ is also a minimal surface 
by Wirtinger's theorem \cite{Wirtinger1936}, while the real and the imaginary part of a 
null holomorphic curve in $\C^n$ are conformal minimal surfaces in $\R^n$. 
By following the proof of Theorem \ref{th:main} and using the analogues 
of Lemma \ref{lem:EnlargingDiameter} for the appropriate classes of holomorphic curves, 
one obtains the following result.

%
%
\begin{theorem}\label{th:complex}
Let $M$ be an open Riemann surface as in Theorem \ref{th:main}. 
Then, for any $n\ge 2$ there exists a continuous map 
$Z:\overline M\to\C^n$ such that $Z:M\to \C^n$ is a complete holomorphic immersion and 
$Z(bM)=\bigcup_i Z(bD_i)$ is the union of pairwise disjoint Jordan curves $Z(bD_i)$.
If $n\ge 3$ then $Z$ can be chosen an embedding and such that $Z|_M:M\to \C^n$ is a 
complete null holomorphic embedding, or (if $n$ is odd) a complete holomorphic 
Legendrian embedding.  
\end{theorem}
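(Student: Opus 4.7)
The plan is to follow verbatim the inductive scheme used in the proof of Theorem \ref{th:main}, replacing each analytic input by its counterpart for the class of holomorphic curves under consideration. Fix one of the three target classes (holomorphic immersions into $\C^n$ with $n\ge 2$, null holomorphic immersions into $\C^n$ with $n\ge 3$, or holomorphic Legendrian immersions into $\C^{2n+1}$ with $n\ge 1$), and start with an initial map $Z_0:\overline{R\setminus D_0}\to\C^n$ of the prescribed type; such a map exists on the compact bordered Riemann surface $\overline{R\setminus D_0}$ by the approximation and existence results in \cite{AlarconForstneric2015MA,AlarconDrinovecForstnericLopez2015PLMS,AlarconForstnericLopez2019CM}. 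Exhaust $M$ by the bordered subsurfaces $M_i=R\setminus\bigcup_{j=0}^i D_j$, and inductively construct a sequence $Z_i:\overline{M_i}\to\C^n$ by applying, at the $i$-th step, the appropriate analogue of Lemma \ref{lem:EnlargingDiameter} that is available in the cited references for immersed, null, and Legendrian curves. Each application enlarges the intrinsic diameter of $Z_i(\overline{M_i})$ by a prescribed amount, while keeping $\|Z_i-Z_{i-1}\|$ arbitrarily small on $\overline{M_{i-1}}$ and guaranteeing a continuous extension to $\overline{M_i}$ sending $bM_i$ to a disjoint union of Jordan curves.

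Using the analogue of Lemma \ref{lem:lowerbound} for the intrinsic radius, a standard choice of perturbation sizes $\epsilon_i>0$ with $\sum_i \epsilon_i<\infty$ ensures that the limit $Z=\lim_{i\to\infty}Z_i$ is well defined and continuous on $\overline{M}$, has bounded image in $\C^n$, is a holomorphic (respectively null, or Legendrian) immersion on $M$, and is complete. The boundary-regularity discussion inherited from Theorem \ref{th:main} transports unchanged to the present setting, yielding the Jordan-curve boundary structure $Z(bM)=\bigcup_i Z(bD_i)$.

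To upgrade the immersion to an embedding when $n\ge 3$, insert at every inductive stage a general-position transversality step: in real codimension $\ge 4$, and within each of the three holomorphic classes, the enlargement lemmas from \cite{AlarconDrinovecForstnericLopez2015PLMS,AlarconForstnericLopez2019CM} can be combined with a generic perturbation to arrange that $Z_i$ is an embedding on the already-built compact set $\overline{M_{i-1}}$. For Legendrian curves the ambient is $\C^{2n+1}$ with $n\ge 1$, so the ambient dimension is at least $3$ and the codimension argument goes through. Uniform smallness of the perturbations then propagates embeddedness to the limit map $Z$ on $\overline{M}$.

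The \emph{main obstacle} is not in the exhaustion/gluing machinery, which is essentially a transcription of the proof of Theorem \ref{th:main}, but in verifying that the enlargement lemma can be invoked in the correct functional class: for null curves it must preserve the quadratic nullity relation $\sum_j (dZ_j)^2=0$ while still producing arbitrarily large increments of the intrinsic diameter, and for Legendrian curves it must preserve the contact condition. Both of these structure-preserving versions are already established in \cite{AlarconForstneric2015MA,AlarconForstnericLopez2019CM}, so the remaining work is purely organizational: choose the error sizes, ensure convergence, completeness, embeddedness, and boundary behaviour at every stage, and pass to the countable limit.
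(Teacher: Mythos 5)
Your proposal reproduces the paper's own argument: the authors prove Theorem \ref{th:complex} precisely by rerunning the inductive exhaustion scheme from Theorem \ref{th:main} with the analogues of Lemma \ref{lem:EnlargingDiameter} for holomorphic, null, and Legendrian curves supplied by the cited references, together with the same general-position step for embeddedness when the real codimension is at least four. Your sketch is a faithful (and slightly more explicit) transcription of that same strategy, including the correct identification of the structure-preserving enlargement lemma as the only genuinely new analytic input.
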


The analogue of Theorem \ref{th:main2} also holds for these classes of maps, 
and the questions in Problem \ref{prob:problem1} make sense for holomorphic (null, Legendrian) curves. 

Theorem \ref{th:complex} contributes to the body of results concerning
Yang's problem \cite{Yang1977JDG} from 1977, asking about the existence 
and boundary behaviour of bounded complete complex submanifolds of complex Euclidean spaces. 
For recents developments on this subject, see the papers 
\cite{Alarcon2018,AlarconForstneric2019,AlarconGlobevnik2017,AlarconGlobevnikLopez2019Crelle,AlarconLopez2016JEMS,Globevnik2015AM} and the authors' survey \cite{AlarconForstnericJAMS}.


\section{Preliminaries} \label{sec:preliminaries}  

Given a compact smooth manifold $M$ with nonempty boundary $bM$ and 
an interior point $p_0\in \mathring M=M\setminus bM$, we denote by $\Cscr=\Cscr(M,p_0)$ 
the set of paths $\gamma:[0,1]\to M$ with $\gamma(0)=p_0$ and $\gamma(1)\in bM$. 
(The word {\em path} always stands for a continuous path. In fact, we shall mainly use piecewise
$\Cscr^1$ paths.) Given a continuous map $X:M\to\R^n$, we define
\[
	\dist_X(p_0,bM) = \inf \bigl\{\length(X\circ\gamma) : \gamma\in \Cscr\bigr\} \in [0,+\infty],
\]
where $\length(\lambda)$ denotes the Euclidean length of a path $\lambda:[0,1]\to\R^n$, i.e.,
the supremum of the sums $\sum_{i=1}^m |\lambda(t_i)-\lambda(t_{i-1})|$ 
over all subdivisions $0=t_0<t_1<\ldots < t_m=1$ of the interval $[0,1]$.
The same definition applies to a compact domain $M$ with countably many
boundary components. The number $\dist_X(p_0,bM)$
is called the {\em intrinsic diameter} of $M$ with respect to the map $X$ and the 
point $p_0$. A change of the base point changes the intrinsic diameter by a constant.

Assume now that $X:M\to\R^n$ is a smooth immersion and let $g=X^* ds^2$ be the
induced Riemannian metric on $M$. Given a piecewise $\Cscr^1$ path 
$\gamma:[0,1]\to M$, we have that
\[
	\length(X\circ\gamma)=\length_g(\gamma)=\int_0^1 \|\dot\gamma(t)\|_g \, dt.
\]
It is well known and easily seen that $\dist_X(p_0,bM)=\dist_g(p_0,bM)$ is the 
infimum of the lengths of piecewise $\Cscr^1$ paths in the family $\Cscr(M,p_0)$.
Indeed, we can replace any path $\gamma$ in $M$ by a piecewise smooth path
which is not longer than $\gamma$ by taking a suitably fine subdivision
$0=t_0<t_1<\ldots < t_m=1$ of $[0,1]$ and replacing each segment 
$C_i=\{\gamma(t):t_{i-1}\le t\le t_i\}$ by the geodesic arc
connecting the points $\gamma(t_{i-1})$ and $\gamma(t_i)$.

We shall consider two bigger classes 
$\Cscr_d=\Cscr_d(M,p_0) \subset \Cscr_{qd}=\Cscr_{qd}(M,p_0)$ of piecewise $\Cscr^1$ paths
in $M$ with the given initial point $p_0\in \mathring M$. The first one, 
$\Cscr_d(M,p_0)$, consists of {\em divergent paths} $\gamma:[0,1)\to M$ with $\gamma(0)=p_0$, 
i.e., such that $\gamma(t)$ leaves any compact subset of $\mathring M$ as $t$ approaches $1$.
(However, the limit of $\gamma(t)$ as $t\to 1$ need not exist.) The class
$\Cscr_{qd}(M,p_0)$ consists of paths $\gamma:[0,1)\to M$ such that $\gamma(0)=p_0$ 
and $\gamma$ has a cluster point on $bM$, i.e., there is a sequence 
\begin{equation}\label{eq:cluster}
	0<t_1<t_2<\cdots <1\ \ \text{with} \ \ \lim_{j\to\infty}t_j=1
	\ \ \text{and} \lim_{j\to\infty} \gamma(t_j)=p\in bM. 
\end{equation}
We call such path {\em quasidivergent}. 

The following lemma shows that we get the same intrinsic diameter by using 
paths in the bigger family $\Cscr_{qd}$, and hence also by using paths in $\Cscr_d$. 

%
%
\begin{lemma}\label{lem:quasidivergent}
Let $g$ be a Riemannian metric on a compact $\Cscr^1$ manifold $M$ with boundary $bM$,
and let $p_0\in \mathring M$. For every path $\gamma\in\Cscr_{qd}$ we have that 
$\length_g(\gamma)\ge \dist_g(p_0,bM)$.
\end{lemma}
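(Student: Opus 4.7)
The plan is to show that any quasidivergent path $\gamma\in\Cscr_{qd}$ can be truncated and then closed up to a path in $\Cscr(M,p_0)$ at negligible extra length cost, so that the infimum defining $\dist_g(p_0,bM)$ is bounded above by $\length_g(\gamma)$. If $\length_g(\gamma)=+\infty$ there is nothing to prove, so assume it is finite.

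First I would fix $\epsilon>0$ and use the hypothesis that $\gamma$ has a cluster point $p\in bM$ to extract a sequence $t_j\nearrow 1$ with $\gamma(t_j)\to p$ as in \eqref{eq:cluster}. Since $M$ is a compact $\Cscr^1$ manifold with boundary, a tubular/collar neighborhood of $bM$ gives a continuous local retraction onto $bM$ whose fibers are short geodesic arcs; equivalently, the function $q\mapsto \dist_g(q,bM)$ is continuous on $M$ and vanishes on $bM$. Consequently $\dist_g(\gamma(t_j),bM)\to 0$, and for all sufficiently large $j$ I can choose a piecewise $\Cscr^1$ arc $\sigma\colon[0,1]\to M$ with $\sigma(0)=\gamma(t_j)$, $\sigma(1)\in bM$, and $\length_g(\sigma)<\epsilon$.

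Next I would concatenate $\gamma|_{[0,t_j]}$ (suitably reparametrized on $[0,\tfrac12]$) with $\sigma$ (reparametrized on $[\tfrac12,1]$) to obtain a piecewise $\Cscr^1$ path $\tilde\gamma\colon[0,1]\to M$ with $\tilde\gamma(0)=p_0$ and $\tilde\gamma(1)\in bM$, so $\tilde\gamma\in\Cscr(M,p_0)$. The length of a path is monotone in the parameter subinterval, hence
\[
\length_g(\tilde\gamma)=\length_g\bigl(\gamma|_{[0,t_j]}\bigr)+\length_g(\sigma)\le \length_g(\gamma)+\epsilon.
\]
Taking the infimum over $\Cscr(M,p_0)$ gives $\dist_g(p_0,bM)\le \length_g(\gamma)+\epsilon$, and letting $\epsilon\to 0$ yields the claim.

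The only non-trivial point is the construction of the short connecting arc $\sigma$, which is the step I expect to merit the most care: one needs to guarantee that $\sigma$ stays inside $M$ (a manifold with boundary, not an open set) and has $g$-length not much bigger than $\dist_g(\gamma(t_j),p)$. This is a routine consequence of the collar neighborhood theorem applied near $p\in bM$: in suitable boundary-adapted coordinates one may simply take the straight segment from $\gamma(t_j)$ to its projection on $bM$, whose $g$-length tends to $0$ with $\dist_g(\gamma(t_j),p)$. Once this is in hand, the rest of the argument is formal, and the same proof also establishes the corresponding inequality for divergent paths since $\Cscr_d\subset \Cscr_{qd}$.
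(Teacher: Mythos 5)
Your proposal is correct and follows essentially the same route as the paper's proof: both arguments truncate $\gamma$ at a time when it is close to $bM$, attach a short arc reaching the boundary (at cost $<\epsilon$ in $g$-length), and let $\epsilon\to 0$. The only cosmetic differences are that you invoke the collar neighborhood theorem explicitly to produce the short connecting arc, whereas the paper simply posits a neighborhood $U$ of $bM$ whose points lie within $g$-distance $\epsilon$ of $bM$, and you separately handle the case $\length_g(\gamma)=+\infty$, which is unnecessary since the truncated length bound holds regardless.
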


\begin{proof} 
Fix $\epsilon>0$. Let $U\subset M$ be a neighbourhood of 
$bM$ such that every point $q\in U$ can be connected to a point $p\in bM$ by 
an arc of length less than $\epsilon$. Given $\gamma\in \Cscr_{qd}$, there is $t_0\in [0,1)$ 
such that $\gamma(t_0)\in U$. Choose an arc $C\subset U$ with $\length_g(C)<\epsilon$ 
connecting $\gamma(t_0)$ to a point $p\in bM$. 
Let $\lambda:[0,1]\to M$ be a path such that $\lambda(t)=\gamma(t)$
for $t\in [0,t_0]$, and $\lambda(t)$ for $t\in [t_0,1]$ is a parametrization of $C$
with $\lambda(1)=p$. Then, 
\[
	\dist_g(p_0,bM)\le \length_g(\lambda)<\length_g(\gamma)+\epsilon.
\]
Letting $\epsilon\to 0$ we obtain $\length_g(\gamma)\ge \dist_g(p_0,bM)$. 
\end{proof}

The following lemma will enable us to control the intrinsic radius of a conformal minimal immersion
from below in the proofs of Theorems \ref{th:main} and \ref{th:main2}.

%
%
\begin{lemma}\label{lem:lowerbound}
Let $M$ be a compact connected $\Cscr^1$ manifold with boundary $bM\ne\varnothing$, and 
let $X: M\to \R^n$ be a $\Cscr^1$ immersion. Given a point $p_0\in \mathring M$
and a number $\eta>0$, there exists a number $\epsilon>0$ such that for every continuous map 
$Y:M\to\R^n$ with $\|X-Y\|_{\Cscr^0(M)} := \max\{|X(p)-Y(p)|:p\in M\} <\epsilon$ we have that
\[
	\inf \bigl\{\length(Y\circ\gamma) : \gamma\in \Cscr_{qd}(M,p_0)\bigr\} \ge \dist_X(p_0,bM)-\eta.
\]
\end{lemma}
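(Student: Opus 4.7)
The plan is to find, for each $\eta>0$, some $\epsilon>0$ such that every continuous $Y$ with $\|Y-X\|_{\Cscr^0(M)}<\epsilon$ will satisfy $\length(Y\circ\gamma)\ge d-\eta$ for all $\gamma\in\Cscr_{qd}(M,p_0)$, where $d:=\dist_X(p_0,bM)$; since $p_0\in\mathring M$ we have $d>0$. My approach is to extract from each $\gamma$ a chronologically ordered finite chain of sample points $p_0=q_0,q_1,\ldots,q_m\in M$ whose size $m$ is bounded by a constant $N$ depending only on $(X,p_0,\eta)$, and then estimate $\length(Y\circ\gamma)\ge\sum_{k=1}^m|Y(q_k)-Y(q_{k-1})|$.

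Writing $g:=X^*ds^2$, so $\dist_X=\dist_g$, I would first invoke uniform continuity of $dX$ on compact $M$ to produce, for any $\eta'>0$, a number $\delta_1>0$ with
\[
  |X(p)-X(q)|\ge(1-\eta')\dist_g(p,q)\quad\text{whenever } \dist_g(p,q)\le\delta_1.
\]
I fix $\eta':=\min\{1/2,\eta/(4d)\}$, $\delta:=\min\{\delta_1/2,\eta/16\}$, and choose a finite $(\delta/2)$-net $S=\{s_1,\ldots,s_N\}$ for $(M,g)$. Writing $V_s$ for the closed Voronoi cell of $s\in S$, any two sites whose cells meet have $g$-distance at most $\delta$. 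Finally I set $\epsilon:=\eta/(4N)$.

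Given such $Y$ and $\gamma\in\Cscr_{qd}(M,p_0)$ with cluster point $p^*\in bM$, I pick $t^*<1$ with $\dist_g(p_0,\gamma(t^*))\ge d-\eta/8$. For every site $s\in S$ visited by $\gamma|_{[0,t^*]}$, let $\tau_s$ be its first-visit time; when $\tau_s>0$, I designate as \emph{parent} $\pi(s)$ any site with $\gamma(t_n)\in V_{\pi(s)}$ along some sequence $t_n\uparrow\tau_s$ (existence by pigeonhole, since $\gamma([\tau_s-h,\tau_s))$ is covered by the finite union of cells other than $V_s$). Then $\pi(s)$ is necessarily a Voronoi neighbor of $s$ with $\tau_{\pi(s)}<\tau_s$, so iterating $\pi$ strictly decreases $\tau$ and defines a tree on the visited sites rooted at the cell containing $p_0$. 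Taking the path $s_0,s_1,\ldots,s_m$ (with $m\le N-1$) in this tree from the root to the cell $V_{s^*}$ containing $\gamma(t^*)$, and setting $\tau_k:=\tau_{s_k}$, $q_k:=\gamma(\tau_k)$, yields the desired chain with $0=\tau_0<\tau_1<\cdots<\tau_m\le t^*$.

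Consecutive points $q_{k-1}\in V_{s_{k-1}}$ and $q_k\in V_{s_k}$ with $s_{k-1},s_k$ Voronoi neighbors satisfy $\dist_g(q_{k-1},q_k)\le 2\delta\le\delta_1$, and $\dist_g(q_m,\gamma(t^*))\le\delta$ gives $\dist_g(p_0,q_m)\ge d-\eta/4$. Combining the local bi-Lipschitz bound with the telescoping triangle inequality,
\[
  \sum_{k=1}^m|X(q_k)-X(q_{k-1})|\ge(1-\eta')\dist_g(p_0,q_m)\ge d-\eta/2,
\]
and then $\sum_{k=1}^m|Y(q_k)-Y(q_{k-1})|\ge d-\eta/2-2N\epsilon\ge d-\eta$, so that $\length(Y\circ\gamma)\ge d-\eta$. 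The hard part is bounding the partition size $m$ uniformly in $\gamma$: the raw sequence of Voronoi transitions of $\gamma$ may be infinite when $\gamma$ oscillates across a cell wall, so one cannot sample at every transition. The first-visit tree is the device that collapses this into a simple chronological path of length at most $N$, while preserving the Voronoi adjacency of consecutive vertices needed for the bi-Lipschitz step.
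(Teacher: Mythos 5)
Your proof is correct, and it takes a genuinely different route from the paper's. The paper reduces to the smooth case via Whitney's theorems, passes to the normal bundle of $X$ and extends $X$ to a tubular-neighbourhood map $F\colon N\to\R^n$ that is a local isometry for the pulled-back metric $g$; the key device is then to \emph{lift} a continuous $Y$ that is $\Cscr^0$-close to $X$ to a map $\wt Y\colon M\to N$ with $Y=F\circ\wt Y$, so that $\length(Y\circ\gamma)=\length_g(\wt Y\circ\gamma)$ becomes an intrinsic length of a path in $(N,g)$ uniformly close to $\gamma$, which is then compared to $\dist_{g,U}(p_0,bM)$ using a nearly $1$-Lipschitz retraction of a small neighbourhood onto $\wt M$. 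You avoid the normal bundle entirely: you discretize $\gamma$ at a bounded number of chronologically ordered sample points via the first-visit Voronoi tree, bound $\length(Y\circ\gamma)$ below by the polygonal Riemann sum $\sum_k|Y(q_k)-Y(q_{k-1})|$, and then control this sum using $\Cscr^0$-closeness to $X$ together with the local co-Lipschitz bound for $X$. The Voronoi tree is exactly the right device to cap the number of samples at $N$ independently of $\gamma$, since (as you observe) a naive count of cell transitions can be infinite. The one step you state too quickly is the local lower bound $|X(p)-X(q)|\ge(1-\eta')\dist_g(p,q)$ for $\dist_g(p,q)\le\delta_1$: it is true, but it requires more than uniform continuity of $dX$ — one also uses that $dX$ is injective with singular values uniformly bounded below on the compact $M$, plus a chart-based estimate (taking half-space charts with convex image) to compare $\dist_g$ with coordinate distance; it is, in essence, the same local-isometry input that the paper obtains from its tubular-neighbourhood set-up. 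Overall your argument is more elementary and self-contained, trading the ambient smooth structure for a combinatorial discretization; the paper's is shorter once the normal-bundle machinery is in place and makes the role of $\Cscr^0$-closeness conceptually transparent.
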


\begin{proof}
This obviously holds if $Y$ is uniformly $\Cscr^1$-close to $X$ on $\mathring M$
since small $\Cscr^1$ perturbations only change lengths of curves by a small amount. 
Furthermore, any $\Cscr^1$ structure on a manifold is equivalent to a $\Cscr^\infty$ structure
by a theorem of H.\ Whitney \cite[Lemma 24]{Whitney1936}.
Hence, we may assume that $M$ is a compact domain with $\Cscr^1$ 
boundary in a smooth manifold $\wt M$ and $X$ is a smooth immersion $X:\wt M\to\R^n$,
where the latter statement uses an approximation theorem of Whitney \cite{Whitney1934TAMS}.

Let $N\to \wt M$ denote the normal bundle of the immersion $X:\wt M\to\R^n$, 
so $\dim N=n$. We identify $\wt M$ with the zero section of $N$. 
By the tubular neighbourhood theorem, $X$ extends to a smooth 
immersion $F:N\to \R^n$ which agrees with $X$ on the zero section $\wt M$ of $N$. Then, 
$g=F^*(ds^2)$ is a smooth Riemannian metric on $N$ whose restriction to $\wt M$ is the 
metric $X^*(ds^2)$, and the map $F:(N,g)\to (\R^n,ds^2)$ is a local isometry.
Let $\dist_g$ denote the distance function on $N$ induced by the Riemannian metric $g$.

We claim that there is a neighbourhood $U\subset N$ of $M$ such that
\begin{equation}\label{eq:est1}
	\dist_{g,U}(p_0,bM) >  \dist_{g,M}(p_0,bM) - \eta/2 = \dist_X(p_0,bM) - \eta/2,
\end{equation}
where $\dist_{g,U}(p_0,bM)$ is the distance from $p_0$ to $bM$ over all paths in $U$ and
$\eta>0$ is as in the lemma. Here is an elementary proof. After shrinking $N$ and $\wt M$
around $M$ if necessary, there is a smooth retraction $\rho:N\to \wt M$ such that  the kernel $\ker (d\rho_x)$ 
of its differential at any point $x\in \wt M$ is the $g$-orthogonal complement 
of $T_x \wt M$ in $T_x N$. Since  $d\rho_x$ equals the identity on $T_x\wt M$, it follows that
$d\rho_x$ has $g$-norm $1$. Hence, for any $r>1$ there is a neighbourhood $U\subset N$ of $M$ 
such that $d\rho_x:T_x N\to T_{\rho(x)}N$ has $g$-norm less than $r$ 
for every point $x\in U$. For every path $\gamma:[0,1]\to U$ we then have 
$\length_g(\rho\circ \gamma)\le r \,\cdotp \length_g(\gamma)$. Choosing
$\gamma$ to be a path in $U$ connecting $\gamma(0)=p_0$ to a point $\gamma(1)\in bM$, we obtain
\[
	 \dist_{g,M}(p_0,bM) \le \length_g(\rho\circ \gamma) \le r \,\cdotp \length_g(\gamma). 
\]
(The first inequality holds even if $\rho\circ\gamma$ is not contained in $M$
since it then crosses $bM$ at some time $t_0\in (0,1)$, and the length of this shorter path
is still $\ge \dist_{g,M}(p_0,bM)$.)  Taking the infimum over all such paths $\gamma$ gives 
\[
	\dist_{g,M}(p_0,bM)\le r \,\cdotp \dist_{g,U}(p_0,bM).
\]
If $r$ is chosen close enough to $1$ then \eqref{eq:est1} holds, thereby proving the claim.

Since $F:N\to F(N)\subset \R^n$ is a local isometry and $M$ is compact, there is a 
number $\epsilon_0>0$ such that for every point $p\in M$, the closed ball 
\begin{equation}\label{eq:gball}
	B_g(p,\epsilon_0):=\{q\in N:\dist_g(p,q)\le \epsilon_0\}
\end{equation}
is contained in $U$ and $F$ maps $B_g(p,\epsilon_0)$ isometrically onto the closed 
Euclidean ball $\overline \B(X(p),\epsilon_0)\subset\R^n$. By decreasing $\epsilon_0$ 
if necessary we may assume that $0<\epsilon_0<\eta/2$.

Given a continuous map $Y:M\to\R^n$ satisfying 
\[
	\max_{p\in M} |Y(p)-X(p)| < \epsilon \le \epsilon_0,
\] 
the above implies that there is a unique continuous map $\wt Y:M\to U$ such that 
\begin{equation}\label{eq:est2}
	Y=F\circ \wt Y\quad \text{and}\quad 
	\dist_g(p,\wt Y(p))=|X(p)-Y(p)|<\epsilon \ \ \text{for all}\ p\in M.
\end{equation}

Let $\gamma\in\Cscr_{cd}$ be a quasidivergent path in $M$ with $\gamma(0)=p_0$.
Fix a number $\epsilon$ with $0<\epsilon<\epsilon_0/2$. There is a 
boundary point $p\in bM$ and $t_0\in (0,1)$ such that 
\begin{equation}\label{eq:closetop}
	\dist_g(\gamma(t_0),p)<\epsilon.
\end{equation}
By \eqref{eq:est2} we have that $Y\circ \gamma = F\circ \wt \gamma$, where the path
$\wt \gamma = \wt Y\circ \gamma:[0,1]\to U$ satisfies 
\begin{equation}\label{eq:est3}
	\dist_g(\gamma(t),\wt\gamma (t)) <\epsilon \ \ \text{for all}\ t\in [0,1).
\end{equation}
Since $F:(N,g)\to (\R^n,ds^2)$ is a local isometry, we also have that 
\begin{equation}\label{eq:length}
	\length_g(\wt \gamma) = \length(F\circ\wt\gamma) = \length(Y\circ \gamma).
\end{equation}
By \eqref{eq:closetop}, \eqref{eq:est3}, and the triangle inequality, 
the point $\wt \gamma(t_0)=\wt Y(\gamma(t_0))$ satisfies 
\[
	\dist_g(\wt \gamma(t_0),p)\le 
	\dist_g(\wt \gamma(t_0),\gamma(t_0)) + \dist_g(\gamma(t_0),p) 
	< \epsilon + \epsilon \le \epsilon_0< \eta/2. 
\]
By adding to the path $\wt \gamma:[0,t_0]\to N$ an arc in the ball $B_g(p,\epsilon)$ \eqref{eq:gball} 
of $g$-length $<\eta/2$ connecting the point $\wt \gamma(t_0)$ to $p\in bM$, we obtain  
a path $\lambda:[0,1]\to U$ connecting $\lambda(0)=p_0$ to $\lambda(1)=p\in bM$ 
such that 
\[
	\length_g(\lambda) < \length_g(\wt \gamma) + \eta/2.
\]
We obviously have $\length_g(\lambda)\ge \dist_{g,U}(p_0,bM)$. 
Together with \eqref{eq:est1} we obtain 
\[
	\length_g(\wt \gamma) > \length_g(\lambda) -\eta/2 
	\ge \dist_{g,U}(p_0,bM) -\eta/2  >  \dist_{X}(p_0,bM) - \eta.
\]
In view of \eqref{eq:length} it follows that $\length(Y\circ \gamma)>\dist_{X}(p_0,bM) - \eta$.
\end{proof}

The proof of Lemma \ref{lem:lowerbound} also applies to the distance from an interior point
$p_0\in \mathring M$ to any given nonempty compact subset $E$ of $M$.
The following result to this effect will be used in the proof of Theorem \ref{th:main2}.

%
%
\begin{lemma}\label{lem:lowerbound2}
Let $M$ be a compact connected $\Cscr^1$ manifold (either closed or with boundary), 
and let $X: M\to \R^n$ be a $\Cscr^1$ immersion. Given a nonempty compact set $E\subset M$,
a point $p_0\in M\setminus E$, and a number $\eta>0$, there is a number $\epsilon>0$ such that 
for every continuous map $Y: M\to\R^n$ with $\|X-Y\|_{\Cscr^0(M)}<\epsilon$ 
and for every path $\gamma:[0,1)\to M\setminus E$ with $\gamma(0)=p_0$
such that $\gamma(t)$ has a limit point in $E$ as $t\to 1$ we have 
$
	\length(Y\circ\gamma) \ge \dist_X(p_0,E)-\eta.
$
\end{lemma}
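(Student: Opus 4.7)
The plan is to mimic the proof of Lemma \ref{lem:lowerbound} \emph{mutatis mutandis}, with the boundary $bM$ replaced by the interior compact set $E$ and quasidivergent paths replaced by paths in $M\setminus E$ with a limit point in $E$. As before, the local-isometry picture coming from a tubular neighbourhood of $X$ lets us lift any nearby map $Y$, and a short closing arc built near the limit point will produce a competitor for $\dist_X(p_0,E)$.

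First I would invoke the Whitney theorems to reduce to the case where $M$ is a compact smoothly bounded domain in a smooth manifold $\wt M$ (taking $\wt M=M$ in the closed case) and $X$ extends to a smooth immersion $X:\wt M\to\R^n$. Via the tubular neighbourhood theorem, extend $X$ further to a smooth immersion $F:N\to\R^n$ of the normal bundle $N\to\wt M$, and set $g=F^*(ds^2)$, so that $F:(N,g)\to(\R^n,ds^2)$ is a local isometry whose restriction to $\wt M$ equals $X^*(ds^2)$. Using the $g$-orthogonal retraction $\rho:N\to\wt M$ and the same norm estimate on $d\rho$ as in Lemma \ref{lem:lowerbound}, I obtain a neighbourhood $U\subset N$ of $M$ with
\begin{equation*}
	\dist_{g,U}(p_0,E)>\dist_{g,M}(p_0,E)-\eta/2=\dist_X(p_0,E)-\eta/2,
\end{equation*}
where the equality uses $E\subset M$. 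Then fix $\epsilon_0\in(0,\eta/2)$ such that for every $p\in M$ the ball $B_g(p,\epsilon_0)$ from \eqref{eq:gball} lies in $U$ and is mapped isometrically by $F$ onto $\overline{\B}(X(p),\epsilon_0)$.

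Given a continuous $Y:M\to\R^n$ with $\|X-Y\|_{\Cscr^0(M)}<\epsilon<\epsilon_0/2$, the local-isometry property yields a unique continuous lift $\wt Y:M\to U$ with $F\circ\wt Y=Y$ and $\dist_g(p,\wt Y(p))<\epsilon$ for all $p\in M$. For a path $\gamma:[0,1)\to M\setminus E$ with $\gamma(0)=p_0$ having a limit point $p\in E$, choose $t_0\in[0,1)$ with $\dist_g(\gamma(t_0),p)<\epsilon$, and set $\wt\gamma=\wt Y\circ\gamma$, so that $\length(Y\circ\gamma)=\length_g(\wt\gamma)$. The triangle inequality gives $\dist_g(\wt\gamma(t_0),p)<2\epsilon<\epsilon_0$. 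Appending to $\wt\gamma|_{[0,t_0]}$ an arc inside $B_g(p,\epsilon_0)\subset U$ of $g$-length less than $\eta/2$ ending at $p\in E$ produces a path $\lambda$ in $U$ from $p_0$ to $E$ with $\length_g(\lambda)<\length_g(\wt\gamma)+\eta/2$. Combining this with the bound on $\dist_{g,U}(p_0,E)$ from the previous paragraph yields $\length(Y\circ\gamma)>\dist_X(p_0,E)-\eta$.

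The only real deviation from Lemma \ref{lem:lowerbound} is the comparison $\dist_{g,U}(p_0,E)>\dist_X(p_0,E)-\eta/2$ when $M$ has boundary: the projection $\rho\circ\gamma$ of a competitor in $U$ might momentarily exit $M$, whereas in the original lemma the target $bM$ itself stopped the retracted path at a useful moment. I expect this to be the main (but mild) obstacle, and to dispose of it by noting that if $\rho\circ\gamma$ does leave $M$ then it must re-enter through $bM$ in order to reach $E\subset M$, so the excursion can be cut out without increasing length, preserving the bound $\dist_{g,M}(p_0,E)\le r\cdot\length_g(\gamma)$. In the application to Theorem \ref{th:main2}, $E$ will lie in the interior of $M$, so the issue does not arise in practice.
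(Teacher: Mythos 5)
Your overall plan matches the paper's intent (the authors simply say that the proof of Lemma \ref{lem:lowerbound} carries over), and you correctly locate the one step that genuinely changes: the inequality $\dist_{g,U}(p_0,E)>\dist_{g,M}(p_0,E)-\eta/2$ can no longer be justified, when $M$ has boundary, by the observation that the projected competitor $\rho\circ\gamma$ hits the terminal set before leaving $M$, because the terminal set $E$ now sits inside $M$ rather than on $bM$. However, the fix you propose does not work as stated. If $\rho\circ\gamma$ exits $M$ at time $a$ and re-enters at time $b$, then $(\rho\circ\gamma)(a)$ and $(\rho\circ\gamma)(b)$ are (generically distinct) points of $bM$; deleting $[a,b]$ leaves two disconnected arcs, not a path, and there is no control on the length of an arc in $bM$ one would have to insert to reconnect them, so the bound $\dist_{g,M}(p_0,E)\le r\cdot\length_g(\gamma)$ is not preserved. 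Note also that the issue is not avoided by assuming $E\subset\mathring M$: a competitor $\gamma\subset U$ may hug $bM$ arbitrarily closely and still push $\rho\circ\gamma$ outside $M$, so your closing remark does not dispose of the obstacle either.

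The gap is easy to fill. One clean way: after choosing $\rho$, also choose a $g$-orthogonal collar of $bM$ in $\wt M$ and let $\sigma$ be the retraction of $M$ together with a thin outer collar onto $M$ (identity on $M$, projection along inward normal geodesics outside $M$). By compactness, for every $r>1$ the outer collar can be taken thin enough that $\sigma$ is $r$-Lipschitz for $g$. Shrinking $U$ so that $\rho(U)$ lies in the domain of $\sigma$, every path $\gamma$ in $U$ from $p_0$ to a point of $E$ yields a path $\sigma\circ\rho\circ\gamma$ in $M$ from $p_0$ to the same endpoint in $E$, with $\length_g(\sigma\circ\rho\circ\gamma)\le r^2\length_g(\gamma)$; taking infima and letting $r\to1$ gives the required inequality. (When $M$ is closed, $\sigma$ is not needed and your argument is already complete.) Alternatively, a compactness argument avoids retractions altogether: if the inequality failed for every $U$, take a decreasing sequence of relatively compact neighbourhoods $U_j$ with $\bigcap_j \overline U_j=M$ and arc-length parameterized competitors $\gamma_j\subset U_j$ from $p_0$ to $E$ of length $\le\dist_{g,M}(p_0,E)-\delta_0$; by Arzel\`a--Ascoli a subsequence converges uniformly to a $1$-Lipschitz path lying in $M$, from $p_0$ to $E$, of length at most $\dist_{g,M}(p_0,E)-\delta_0$, a contradiction. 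With either fix, the rest of your argument goes through unchanged.
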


The following lemma (see \cite[Lemma 4.1]{AlarconDrinovecForstnericLopez2015PLMS})
is the main ingredient in the proof of Theorem \ref{th:main}. It enables one to make the intrinsic diameter 
of an immersed conformal minimal surface arbitrarily big by a $\Cscr^0$ small deformation.

%
%
\begin{lemma}\label{lem:EnlargingDiameter}
Let $M$ be a compact bordered Riemann surface, and let $X:M\to\R^n$ $(n\ge 3)$ 
be a conformal minimal immersion of class $\Cscr^1(M)$. 
Given a point $p_0\in \mathring M=M\setminus bM$, 
an integer $d\in\Z_+$, and numbers $\epsilon>0$ (small) and $\mu>0$ (big), there is a 
continuous map $Y:M\to \R^n$ whose restriction to $\mathring M$ is a conformal minimal 
immersion such that the following conditions hold.
\begin{enumerate}[\rm (i)]
\item $|Y(p)-X(p)|<\epsilon$ for all $p\in M$.
\item $\dist_Y(p_0,bM)>\mu$.
\item $Y|_{bM}:bM\to \R^n$ is injective.
\item $\Flux_Y=\Flux_X$.
\end{enumerate}
\end{lemma}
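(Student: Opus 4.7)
The plan is to combine a Mergelyan-type approximation theorem for conformal minimal immersions with prescribed flux with a Nadirashvili-style labyrinth construction placed in a collar neighbourhood of $bM$, followed by a generic small perturbation (available because $n\ge 3$) to achieve boundary injectivity. The integer $d$ plays an auxiliary role: it fixes the finite-dimensional space of flux periods that must be preserved during the correction.

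First I would fix a smooth collar $\Omega\cong bM\times[0,1)$ of $bM$ in $M$ and a compact subdomain $K\subset\mathring M$ with $p_0\in K$ and $M\setminus K\subset\Omega$. Inside $\Omega\setminus K$ I would build a \emph{labyrinth}: a finite family $\{A_j\}_{j=1}^N$ of pairwise disjoint compact arcs in $\Omega\setminus K$, arranged so that every path in $M$ from $K$ to $bM$ must cross at least a prescribed number of the $A_j$. This is the classical Jorge--Xavier construction transplanted to the boundary collar of a bordered Riemann surface.

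Next I would invoke the Mergelyan theorem for conformal minimal immersions on compact admissible sets to produce a conformal minimal immersion $Y:\mathring M\to\R^n$ with $\Flux_Y=\Flux_X$, approximating $X$ uniformly on $K$ to within $\epsilon/2$, and such that along each arc $A_j$ the derivative $|dY|$ is very large in a chosen tangent direction. Concretely, one adds to the null holomorphic $1$-form $\partial X$ a correction $\sum_j\phi_j\,\theta_j$ supported in small neighbourhoods of the arcs, where each $\theta_j$ is a null $1$-form whose real periods vanish (so that flux is preserved) and $\phi_j$ is a large amplitude. The crucial directional decomposition, going back to Nadirashvili, is that near each $A_j$ the large correction can be absorbed by two of the $n$ ambient coordinates while the remaining $n-2\ge 1$ coordinates carry all the $\Cscr^0$ control; since each arc is short in $M$, the net displacement of $Y$ from $X$ across it is small. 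Property (i) then follows from this local control together with uniform approximation on $K$; property (iv) is built in; and property (ii) follows because any quasidivergent path (in the sense of Lemma~\ref{lem:quasidivergent}) must leave $K$ and cross at least $N$ of the arcs $A_j$, each crossing contributing a definite lower bound on the length of $Y\circ\gamma$. Choosing $N$ and the amplitudes $\phi_j$ sufficiently large gives $\dist_Y(p_0,bM)>\mu$.

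Finally, for property (iii), I would apply one further Mergelyan correction of size less than $\epsilon/2$ supported in a neighbourhood of $bM$ with zero flux, chosen to make $Y|_{bM}$ injective; since $n\ge 3$, the set of boundary maps with at least one double point has positive codimension in the space of admissible boundary values, so a generic arbitrarily small perturbation removes all self-intersections. The main obstacle is the simultaneous control of conditions (i), (ii) and (iv), and the resolution is exactly the directional decomposition above: it separates the two coordinates that absorb the large tangential oscillations needed for (ii) from the $n-2$ coordinates that carry the $\Cscr^0$ displacement needed for (i), while the null $1$-form perturbation used to realise it has prescribed (here vanishing) real periods, ensuring (iv). Runge--Mergelyan approximation then patches the local modifications into a single conformal minimal immersion on all of $M$.
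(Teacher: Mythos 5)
The paper does not actually prove this lemma; it is quoted verbatim from \cite[Lemma 4.1]{AlarconDrinovecForstnericLopez2015PLMS}, with a pointer that condition (iii) comes from the general position theorem \cite[Theorem 4.5]{AlarconDrinovecForstnericLopez2015PLMS}. The proof in that reference rests on the Riemann--Hilbert boundary value problem for conformal minimal immersions, which is the analytic engine that lets one stretch the boundary of a conformal minimal immersion, keep the $\Cscr^0$ error small, and --- crucially --- keep the conformal structure of $M$ fixed exactly, which is what the lemma asserts. Your plan replaces this by a Mergelyan/Runge approximation coupled to a Jorge--Xavier/Nadirashvili labyrinth, which is a genuinely different route and, as sketched, has gaps.

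First, the sentence ``adds to the null holomorphic $1$-form $\partial X$ a correction $\sum_j\phi_j\,\theta_j$ supported in small neighbourhoods of the arcs'' cannot be made literal: holomorphic and null holomorphic $1$-forms are never compactly supported. What Runge/Mergelyan gives is a form that is \emph{small} on a prescribed compact admissible set and otherwise uncontrolled; in particular, the behaviour of $Y$ on the collar $M\setminus(K\cup\bigcup_j A_j)$ is not estimated by the approximation theorem alone, so condition (i) on all of $M$ does not follow from what you wrote. Second, the ``directional decomposition'' is stated backwards: in Nadirashvili's argument the two coordinates in which the L\'opez--Ros--type blow-up occurs are precisely the ones whose $\Cscr^0$ displacement one must fight to control, and the control comes from delicate geometric bookkeeping (orientation of the tangent plane relative to the current arc, smallness of each labyrinth compartment, iteration over tiers), not from ``the remaining $n-2$ coordinates carrying the $\Cscr^0$ control.'' This bookkeeping is exactly the hard part, and it is absent. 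Third, your reading of the integer $d$ is wrong: in the source lemma $d$ is a jet-interpolation parameter (the new immersion agrees with $X$ to order $d$ at $p_0$), not a bound on the number of flux periods; the flux constraint is the separate condition (iv). Finally, on the point of method: the labyrinth-plus-L\'opez--Ros approach, as classically implemented, changes the conformal type at each step (one must shrink the domain), and this is precisely the obstruction the Riemann--Hilbert technique was introduced to remove. A Mergelyan route that keeps $M$ fixed \emph{and} delivers the required uniform $\Cscr^0$ bound would be a nontrivial variant, not a routine transplant, and you would need to explain how the uncontrolled annular region between $K$ and $bM$ is handled. Your treatment of (iii) by a generic zero-flux boundary perturbation for $n\ge 3$ is in the right spirit and does match the cited general position argument.
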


Condition (iii) follows from a general position theorem; 
see \cite[Theorem 4.5]{AlarconDrinovecForstnericLopez2015PLMS}. The analogous result 
holds if $M$ is a nonorientable compact bordered conformal surface; 
see \cite[Section 6.3]{AlarconForstnericLopezMAMS} and in particular
Lemma 6.7 in the cited paper.


\section{Proof of Theorems \ref{th:main} and \ref{th:main2}} \label{sec:proof}

\noindent{\em Proof of Theorem \ref{th:main}.}
Assume that $R$ is a compact Riemann surface and $M$ is a domain in $R$ of the form
\begin{equation}\label{eq:M}
	M=R\setminus \bigcup_{i=0}^\infty D_i,
\end{equation}
where $\{D_i\}_{i\in\Z_+}$ is a countable family of closed, pairwise disjoint, smoothly boun\-ded
discs in $R$. We shall construct a continuous map $X: \overline M\to\r^n$ satisfying the conclusion of the theorem 
and such that the Jordan curves $X(bD_i)$, $i\in \Z_+$, have Hausdorff dimension one. 
Moreover, we shall ensure that the complete conformal minimal immersion $X:M\to\R^n$ has vanishing flux. 

For every $i=0,1,2,\ldots$ we let
\begin{equation}\label{eq:Mi}
	M_i=R\setminus\bigcup_{k=0}^i \mathring D_k.
\end{equation}
This is a compact bordered Riemann surface with boundary $bM_i=\bigcup_{k=0}^i bD_k$, and
\[
	M_0 \supset M_1\supset M_2\supset \cdots \supset \bigcap_{i=1}^\infty M_i = \overline M.
\]
By \cite[Theorem 4.5 (a)]{AlarconDrinovecForstnericLopez2015PLMS} there exists 
a conformal minimal immersion $X_0:M_0\to\R^n$ of class $\Cscr^1(M_0)$ with vanishing flux 
such that $X_0|_{bM_0}\colon bM_0\to\R^n$ is injective. 
Choose a Riemannian distance function ${\mathrm d}$ on $R$, a point $p_0\in M$, and 
a pair of numbers $\epsilon_0>0$ and $\tau_0\in\n=\{1,2,3,\ldots\}$.
An inductive application of Lemmas \ref{lem:lowerbound} and \ref{lem:EnlargingDiameter} 
furnishes a sequence of conformal minimal immersions $X_i:M_i\to\R^n$
of class $\Cscr^1(M_i)$, numbers $\epsilon_i>0$, and integers $\tau_i>i$ satisfying 
the following conditions for every $i\in\n$.
\begin{enumerate}
\item[\rm (a$_i$)] $\dist_{X_i}(p_0,bM_i) > i$.
\item[\rm (b$_i$)] $X_i:bM_i\to\R^n$ is injective.
\item[\rm (c$_i$)] $\sup_{p\in M_i} |X_i(p)-X_{i-1}(p)| < \epsilon_{i-1}$.
\item[\rm (d$_i$)] For every continuous map $Y:M_i\to\R^n$ with $\|Y-X_i\|_{\Cscr(M_i)} < 2\epsilon_i$ 
we have 
\[
	\inf \bigl\{\length(Y\circ\gamma) : \gamma\in \Cscr_{qd}(M_i,p_0)\bigr\} 
	>\dist_{X_i}(p_0,bM_i)-1 > i-1.
\]
\item[\rm (e$_i$)] We have $0<\epsilon_i<\frac12\min\left\{ \epsilon_{i-1} , \delta_{i},\tau_i^{-i}\right\}$, where
\[
	\delta_{i} := \frac1{i^2} 
	\inf\left\{ |X_{i}(p)-X_{i}(q)|: p,q\in bM_{i},\, {\rm d}(p,q)>\frac1{i}\right\} >0.
\]
\item[\rm (f$_i$)] We have $\tau_i>\tau_{i-1}$ and for each $k\in\{0,\ldots,i\}$ there is a set 
$A_{i,k}\subset X_i(bD_k)$ consisting of $\tau_i^{i+1}$ points such that 
\[
	\max\big\{\dist (p,A_{i,k})\colon p\in X_i(bD_k)\big\}<\frac1{\tau_i^i},
\]
where $\dist(p,A_{i,k})=\min\{|p-q|\colon q\in A_{i,k}\}$ is the Euclidean distance in $\R^n$.
\item[\rm (g$_i$)] $X_i$ has vanishing flux.
\end{enumerate}

Let us explain the induction step. Assume that for some $i\in \N$ we have maps $X_0,\ldots, X_{i-1}$
and numbers $\epsilon_0,\ldots,\epsilon_{i-1}$ and $\tau_0,\ldots,\tau_{i-1}$ satisfying these conditions for the respective
values of the index. (This holds for $i=1$ by using $X_0$, $\epsilon_0$, and $\tau_0$, and the 
above conditions are void except for (a$_0$), (b$_0$), and  (g$_0$); the second part of  (f$_0$) also 
holds true if we choose $\tau_0\in\n$ sufficiently large.)
Lemma \ref{lem:EnlargingDiameter} applied to $X_{i-1}|_{M_i}$ furnishes 
a conformal minimal immersion $X_i:M_i\to\R^n$ satisfying (a$_i$), (b$_i$), (c$_i$), and (g$_i$); 
note that $X_{i-1}|_{M_i}$ is flux vanishing since so is $X_{i-1}$ by (g$_{i-1}$).
Pick $\tau_i\in\n$ so large that condition (f$_i$) is satisfied; it suffices to choose 
\[
	\tau_i>\tau_{i-1}+\sum_{k=0}^i\length(X_i(b D_k)).
\]
Pick a number $\epsilon_i>0$ satisfying 
condition (e$_i$); such exists since $X_i|_{bM_i}$ is injective by (b$_i$). Finally, decreasing 
$\epsilon_i>0$ if necessary we may assume that condition (d$_i$) holds as well in view of Lemma 
\ref{lem:lowerbound}. The induction may proceed.

Conditions  (c$_i$) and (e$_i$) imply that the sequence $X_i$ converges uniformly on 
$\overline M$ \eqref{eq:M} to a continuous map $X=\lim_{i\to\infty}:\overline M\to\R^n$ 
whose restriction to $M$ is a conformal minimal immersion $X: M\to\R^n$, 
provided that each $\epsilon_i>0$ is chosen sufficiently small.
More precisely, for every $p\in \overline M$ we have that
\begin{equation}\label{eq:XXi}	
	|X(p)-X_{i}(p)|\le \sum_{k=i}^\infty |X_{k+1}(p)-X_{k}(p)|  
	< \sum_{k=i}^\infty  \epsilon_k < 2\epsilon_i.
\end{equation}
We can extend $X$ from $\overline M$ to a continuous map $X:M_i\to\R^n$ such that the above
inequality holds for all $p\in M_i$. 

Conditions (a$_i$) and $0<\epsilon_i<\epsilon_{i-1}/2$ (see (e$_i$)) ensure that $X:M\to\R^n$ is complete. 
Indeed, consider any divergent path $\gamma:[0,1) \to M$ with 
$\gamma(0)=p_0$. There is an increasing sequence $0<t_1<t_2<\cdots <1$ with $\lim_{j\to\infty}t_j=1$
such that $\lim_{j\to\infty} \gamma(t_j)=p\in bM$ (cf.\ \eqref{eq:cluster}).
Then, $p\in bD_{i_0}$ for some $i_0\in \Z_+$, and hence $p\in bM_i$ for all $i\ge i_0$.
It follows that $\gamma$ is a quasidivergent path in the bordered Riemann surface
$M_i$ for any $i\ge i_0$. (See Sect.\ \ref{sec:preliminaries} for this notion.) 
Conditions (a$_i$), (d$_i$), and \eqref{eq:XXi} imply for any $i\ge i_0$ that
\[
	\length(X(\gamma)) > \dist_{X_i}(p_0,bM_i)-1 > i-1. 
\]
Letting $i\to +\infty$ shows that $\length(X(\gamma))=+\infty$. 

Conditions (b$_i$), (c$_i$), and (e$_i$) imply that the limit map $X:\overline M\to\R^n$
is injective on $bM=\bigcup_{i\in\Z_+} bD_i$ (see 
\cite[proof of Theorem 1.1]{AlarconDrinovecForstnericLopez2015PLMS} for the details), 
whereas (g$_i$) ensures that $X$ has vanishing flux. 

Finally, in order to see that all Jordan curves $X(bD_k)$ $(k\in\Z_+)$ 
have Hausdorff dimension one, pick such a $k$. By \eqref{eq:XXi}, (e$_i$), and (f$_i$), 
we have for each $i> k$ that 
\begin{equation}\label{eq:dim}
	\max\{\dist (p,A_{i,k})\colon p\in X(bD_k)\}<\frac2{\tau_i^i}.
\end{equation}
Since  $A_{i,k}$ consists of precisely $\tau_i^{i+1}$ points and $\tau_i^{i+1}(2/\tau_i^i)^{1+1/i}=2^{1+1/i}\le 4$ 
for every integer $i>k$, \eqref{eq:dim} implies that the Hausdorff measure $\Hcal^1(X(bD_k))$ is finite, 
and hence the Hausdorff dimension of $X(bD_k)$ is at most one (cf.\ \cite[Lemma 2.2]{MartinNadirashvili2007ARMA} 
or \cite[Sect.\ 4.1]{Alarcon2010TAMS}; see \cite{Morgan2009} for an introduction to the Hausdorff measure). 
On the other hand, $X(bD_k)$ is homeomorphic to the circle $\mathbb{\t}=\{z\in\C: |z|=1\}$ and 
hence its Hausdorff dimension is at least one, so it is one.

Furthermore, by using the general position argument for minimal surfaces
at every step of the proof (see \cite[Theorem 4.1]{AlarconForstnericLopez2016MZ})
we can ensure that the limit map $X:M\to\R^n$ is an immersion with simple double
points if $n=4$, and is an embedding if $n\ge 5$. See 
\cite[proof of Theorem 1.1]{AlarconDrinovecForstnericLopez2015PLMS} for the details.

This completes the proof of Theorem \ref{th:main}. The same proof applies in the nonorientable
case if we replace Lemma \ref{lem:EnlargingDiameter} by \cite[Lemma 6.7]{AlarconForstnericLopezMAMS}.
\qed

%
%

\medskip
\noindent{\em Proof of Theorem \ref{th:main2}.}
For every $i=0,1,2,\ldots$ let $M_i=R\setminus\bigcup_{k=0}^i \mathring D_k$ 
be the compact domain \eqref{eq:Mi} in $R$.
Choose a conformal minimal immersion $X_0:M_0\to\R^3$. Then, the given metric
on $R$ is comparable on $M_0$ to the metric $g_0=(X_0)^*(ds^2)$ induced by $X_0$, 
and hence condition \eqref{eq:infinitedistance} holds for the latter metric as well. 
We shall use the same argument at every step when changing the metric.

By \eqref{eq:infinitedistance}  there is $i_1\in\N$ such that $\dist_{M_{i_1},g_0}(p_0,E)>1$.
Choose a conformal minimal immersion $X_1: M_{i_1}\to\R^3$ which approximates $X_0$ 
uniformly on $M_{i_1}$ and satisfies the conditions
\[
	\dist_{M_{i_1},X_1}(p_0,bM_{i_1})>1 \quad \text{and}\quad \dist_{M_{i_1},X_1}(p_0,E)>1.
\]
The first condition is achieved by Lemma \ref{lem:EnlargingDiameter},
while the second holds by Lemma \ref{lem:lowerbound2} provided the approximation is close enough.
The metric $g_1=(X_1)^*(ds^2)$ is comparable on $M_{i_1}$ with $g_0$.
Hence, by \eqref{eq:infinitedistance} there is an integer $i_2>i_1$ such that $\dist_{M_{i_2},g_1}(p_0,E)>2$. 
The same argument as before gives a conformal minimal immersion 
$X_2:M_{i_2}\to\R^3$ approximating $X_1$ uniformly on $M_{i_2}$ and satisfying
\[
	\dist_{M_{i_2},X_2}(p_0,bD_{i_2})>2 \quad \text{and}\quad \dist_{M_{i_2},X_2}(p_0,E)>2.
\]
Continuing inductively we get sequences of integers $i_1<i_2<\cdots$
and conformal minimal immersions $X_k:M_{i_k}\to\R^3$ $(k\in\N)$ satisfying
\begin{equation}\label{eq:distanceestimates}
	\dist_{M_{i_k},X_k}(p_0,bM_{i_k})>k  \quad \text{and}\quad \dist_{M_{i_k},X_k}(p_0,E)>k.
\end{equation}
Assuming as we may that $X_{k}$ approximates $X_{k-1}$ sufficiently closely uniformly on $M_{i_k}$
for every $k\in\N$, we can ensure as in the proof of Theorem \ref{th:main} 
that the sequence $X_k$ converges uniformly on the compact set
$M'= \bigcap_{i=0}^\infty \overline M_i$ to a continuous limit map $X:M'\to \R^3$ 
whose restriction to the interior of $M'$ is a complete conformal minimal immersion,
and such that $X|_M:M\to\R^n$ satisfies the conclusion of the theorem.
(Here, $M=\mathring M' \setminus E$ is given by \eqref{eq:MDq}.)
In particular, the distance from any point of $M$ to the boundary $bM=\bigcup_{i}bC_i \cup bE$
in the metric $X^*(ds^2)$ is infinite by \eqref{eq:distanceestimates} and Lemma \ref{lem:lowerbound2}.
\qed


\subsection*{Acknowledgements}
A.\ Alarc\'on is supported by the State Research Agency (SRA) and European Regional Development Fund (ERDF) via the grant no. MTM2017-89677-P, MICINN, Spain.
F.\ Forstneri\v c is supported  by the research program P1-0291 and the research grant 
J1-9104 from ARRS, Republic of Slovenia. The authors wish to thank anonymous referees
for their remarks and useful suggestions.





\vspace*{0.5cm}

\noindent Antonio Alarc\'{o}n

\noindent Departamento de Geometr\'{\i}a y Topolog\'{\i}a e Instituto de Matem\'aticas (IEMath-GR), Universidad de Granada, Campus de Fuentenueva s/n, E--18071 Granada, Spain

\noindent  e-mail: {\tt alarcon@ugr.es}

\vspace*{0.5cm}
\noindent Franc Forstneri\v c

\noindent Faculty of Mathematics and Physics, University of Ljubljana, Jadranska 19, SI--1000 Ljubljana, Slovenia

\noindent 
Institute of Mathematics, Physics and Mechanics, Jadranska 19, SI--1000 Ljubljana, Slovenia.

\noindent e-mail: {\tt franc.forstneric@fmf.uni-lj.si}

\end{document}